\title{\dwifob: A Dynamically Weighted Inertial Forward--Backward Algorithm for Monotone Inclusions}
\author{Hamed Sadeghi\thanks{Email: \{\href{mailto:hamed.sadeghi@control.lth.se}{hamed.sadeghi}, \href{mailto:sebastian.banert@control.lth.se}{sebastian.banert}, \href{mailto:pontus.giselsson@control.lth.se}{pontus.giselsson}\}@control.lth.se. Affiliation: Department of Automatic Control, Lund University, Lund, Sweden.}
   \and Sebastian Banert\footnotemark[1]
   \and Pontus Giselsson\footnotemark[1]}
\date{}
\pgfplotsset{compat=newest}
\definecolor{redd}{rgb}{0.8,0.22,0.0} 
\definecolor{bluee}{rgb}{0.23,0.37,0.8} 
\definecolor{brownn}{rgb}{0.72,0.53,0.04} 
\definecolor{greenn}{rgb}{0.0,0.55,0.0}   
\definecolor{blackk}{rgb}{0.0,0.0,0.0} 
\definecolor{grayy}{rgb}{0.6,0.6,0.6} 
\newcommand{\PrimS}{\ensuremath{\mathcal{H}}}
\newcommand{\nat}{\ensuremath{\mathbb{N}}}
\newcommand{\Id}{\ensuremath{\mathop{\mathrm{Id}}}}
\newcommand{\p}[1]{\ensuremath{\mathord{\left( #1 \right)}}}
\newcommand{\norm}[1]{\ensuremath{\mathord{\left\Vert #1 \right\Vert}}}
\newcommand{\set}[1]{\ensuremath{\mathord{\left\lbrace #1 \right\rbrace}}}
\newcommand{\inpr}[2]{\ensuremath{\mathord{\left\langle #1, #2 \right\rangle}}}
\newcommand{\normsq}[1]{\left\Vert #1 \right\Vert^2}
\newcommand{\zer}[1]{\operatorname{\mathrm{zer}}(#1)}
\newcommand{\reals}{\mathbb{R}}
\newcommand{\posop}{\mathcal{M}(\PrimS)}
\newcommand{\lp}{\left(}
\newcommand{\rp}{\right)}
\newcommand{\xs}{x^\star}
\newcommand{\gra}[1]{\operatorname{gra}(#1)}
\newcommand{\seq}[2]{$(#1_#2)_{#2\in\mathbb{N}}$}
\newcommand{\dwifob}{\textsc{Dwifob}}
\newcommand{\defeq}{\ensuremath{\mathrel{\mathop:}=}}
\newcommand{\ccp}{\ensuremath{C_{\mathrm{CP}}}}
\newcommand{\calg}{\ensuremath{C_{\mathrm{alg}}}}
\theoremstyle{plain}
\newtheorem{theorem}{Theorem}
\theoremstyle{plain}
\theoremstyle{plain}
\theoremstyle{definition}
\theoremstyle{definition}
\theoremstyle{plain}
\newtheorem{assumption}{Assumption}
\crefname{assumption}{Assumption}{Assumptions}
\Crefname{assumption}{Assumption}{Assumptions}
\theoremstyle{plain}
\newtheorem{corollary}{Corollary}
\theoremstyle{plain}
\newtheorem{remark}{Remark}
\begin{document}
\maketitle

\begin{abstract}
We propose a novel \emph{dynamically weighted inertial forward--backward} algorithm (\dwifob) for solving structured monotone inclusion problems. The scheme exploits the globally convergent forward--backward algorithm with deviations in \cite{nofob-increments} as the basis and combines it with the extrapolation technique used in Anderson acceleration to improve local convergence. 
We also present a globally convergent primal--dual variant of \dwifob{} and numerically compare its performance to the primal--dual method of Chambolle--Pock and a Tikhonov regularized version of Anderson acceleration applied to the same problem. In all our numerical evaluations, the primal--dual variant of \dwifob{} outperforms the Chambolle--Pock algorithm. Moreover, our numerical experiments suggest that our proposed method is much more robust than the regularized Anderson acceleration, which can fail to converge and be sensitive to algorithm parameters. These numerical experiments highlight that our method performs very well while still being robust and reliable.

\end{abstract}

\paragraph{Key words.}  forward--backward splitting,  monotone inclusions,  acceleration method,  inertial forward--backward method,  inertial primal--dual algorithm.

\section{Introduction}

We consider structured monotone inclusion problems of the form
\begin{equation}\label{eq:main_inclusion}
    0\in Ax+Cx,
\end{equation}
where $A:\PrimS\rightarrow2^\PrimS$ is a maximally monotone operator, $C:\PrimS\rightarrow\PrimS$ is a cocoercive operator, and $\PrimS$ is a real Hilbert space. This fundamental problem emerges in many areas such as optimization \cite{Eckstein1989SplittingMF,Raguet_2015} and variational analysis \cite{Attouch_coupling,Chen_1997,Tseng_2000}.

\emph{Forward--backward} (FB) splitting \cite{Bruck1975AnIS,Lions_1979,Passty_1979} has been widely used to solve structured monotone inclusions of the form \eqref{eq:main_inclusion}. The FB splitting method is given by 
\begin{align*}
    x_{n+1}=(\Id + \gamma_nA)^{-1}\circ(\Id-\gamma_nC)(x_n),
\end{align*}
where $\gamma_n > 0$ is a step-size parameter. It involves evaluating the operator $C$ in a forward (explicit) step, followed by computing the resolvent of the operator $A$ in a backward (implicit) step. The FB splitting has many well-known special instances, such as the gradient method, the proximal point algorithm \cite{rockafellar1976monotone}, and the proximal-gradient method \cite{Combettes2011ProximalSM}.

The {inertial proximal point} algorithm in \cite{Alvarez_2000,Alvarez_2001} improves convergence by exploiting previous information in a momentum term. By incorporating an additional cocoercive operator to the inertial proximal point algorithm, several variations of inertial FB algorithms have been proposed to solve monotone inclusions \cite{attouch2020convergence,Cholamjiak_2018,lorenz2015inertial}. These algorithms provide enhanced performance, but are limited to FB splitting algorithms.

\emph{Anderson acceleration} \cite{Anderson1965IterativePF} is an acceleration scheme that is aimed at expediting the convergence of fixed-point iterations including the FB algorithm. This algorithm was originally developed to solve nonlinear integral equations and was later used to solve fixed-point problems \cite{fang2009two,walker2011anderson}. Lately, Anderson acceleration has gained considerable attention in the optimization community \cite{he2021solve,ouyang2020anderson,sad2021Hyb,scieur2020regularized,zhang2020globally}. 

Local convergence of Anderson acceleration has been studied recently. For instance, the authors of \cite{conv_AA_Kelley} showed that Anderson acceleration, if applied to a contractive fixed-point map, exhibits linear convergence provided that the coefficients in the linear combination remain bounded. Along the same line, it was shown in \cite{AA_Evans} that applying Anderson acceleration to a linearly convergent fixed-point iteration improves the convergence rate in the vicinity of a fixed point. Despite recent studies that investigate local convergence properties of Anderson acceleration, yet, to the best of our knowledge, no global convergence result for Anderson acceleration (and its regularized variants) has been reported in the literature.

Recently, the FB algorithm with deviations was proposed in \cite{nofob-increments} to solve the inclusion problem \eqref{eq:main_inclusion}. This algorithm uses two auxiliary terms---called \emph{deviations}---which are added to the iterates in order to define extrapolated iterates.  
The algorithm uses a safeguarding \emph{norm condition} in the form of an iteration-dependent constraint on the norm of the deviations that has to be satisfied at each iteration in order to guarantee convergence. 
As long as this norm constraint is satisfied, the deviations can be chosen freely and point in any direction. In \cite{nofob-increments}, one suggestion is to define the deviations along the momentum direction as $a_n(x_n-x_{n-1})$, which gives an inertial-type method. An upper bound to the momentum coefficient $a_n$ is directly obtained by the norm condition.

In this work, inspired by the extrapolation technique of Anderson acceleration, we propose a method to generate the deviation vectors of \cite{nofob-increments} by linearly combining multiple momentum terms. The aim is to construct a version of FB splitting that exhibits fast local convergence while maintaining global convergence of the algorithm, thanks to the norm condition. This is in contrast to Anderson acceleration and its regularized variants \cite{scieur2020regularized,shi2019regularized} that are only locally convergent. We call our proposed algorithm \emph{dynamically weighted inertial forward--backward} method (\dwifob).

The notion of safeguarding has been used also in other works to ensure global convergence of nonlinear acceleration algorithms \cite{giselsson2016line,sad2021Hyb,themelis2019supermann,zhang2020globally}. These are hybrid methods that can select between a basic globally convergent and a locally fast converging method, as decided by a safeguarding condition in every iteration. Although having the same objective of achieving global convergence and fast local convergence, these safeguarding conditions are completely different compared to what we use in \dwifob.

Besides the \dwifob{} scheme itself, we also propose a primal--dual version of the \dwifob{} scheme which is derived by a direct translation of the \dwifob{} algorithm into a primal--dual framework. We have compared the primal--dual \dwifob{} algorithm with the Chambolle--Pock algorithm in numerical experiments, which show a significant advantage of our proposed method in both convergence rate and overall computational cost. Moreover, our numerical evaluations show that regularized Anderson acceleration, in addition to being only locally convergent, is very sensitive to variations in the choice of parameters, while \dwifob{} is more robust to parameter selection with the significant added benefit of having global convergence guarantees. The aforementioned robustness and global convergence property along with fast local convergence make the \dwifob{} algorithm well-performing and  reliable.

The paper is outlined as follows. In \cref{sec:basic_algs}, after presenting the notations and stating the problem under consideration, we review two algorithms that our algorithm is built upon. \Cref{sec:main_alg} describes our proposed \dwifob{} algorithm and \cref{sec:pd-DWIFOB} extends the \dwifob{} algorithm to the primal--dual setting and suggests a novel algorithm in this framework. Numerical evaluations are provided in \cref{sec:num-exp} and concluding remarks are presented in \cref{sec:conclusion}.

\section{Problem statement and preliminaries}\label{sec:basic_algs}

In this section, we present our notation and state the monotone inclusion problem and the associated assumptions. We then briefly review two methods \cite{nofob-increments,walker2011anderson} that can be used to solve the problem at hand. These methods come with their own sets of weaknesses and strengths. Our proposed method combines these two methods to benefit from their individual strengths and avoid their drawbacks.

\subsection{Notation}

Throughout the paper, $\reals$ and $\reals^d$ indicate the sets of real numbers and $d$-dimensional real column vectors respectively. Additionally, $\PrimS$ and $\mathcal{K}$ denote real Hilbert spaces that are equipped with inner products $\inpr{\cdot}{\cdot}$ and induced norms $\norm{\cdot}=\sqrt{\inpr{\cdot}{\cdot}}$. A linear, bounded, self-adjoint operator $M\colon \PrimS\to \PrimS$ is said to be \emph{strongly positive} if there exists $\rho > 0$ such that $\inpr{x}{Mx}\geq\rho\normsq{x}$ for all $x\in\PrimS$. We denote the set of such operators $\mathcal{M}(\PrimS)$. For $M\in\mathcal{M}(\PrimS)$, the \emph{$M$-induced inner product} and \emph{norm} are defined by $\inpr{x}{y}_M=\inpr{x}{My}$ and $\norm{x}_M = \sqrt{\inpr{x}{Mx}}$ ($x, y \in \PrimS$), respectively.

By $2^\PrimS$, we denote the \emph{power set} of $\PrimS$. A map $A:\PrimS\rightarrow2^{\PrimS}$ is characterized by its \emph{graph} $\gra{A} = \{(x,u)\in\PrimS\times\PrimS : u\in Ax\}$. An operator $A:\PrimS\rightarrow2^{\PrimS}$ is \emph{monotone}, if $\inpr{u-v}{x-y}\geq0$ for all $(x,u),(y,v)\in\gra{A}$. A monotone operator $A$ is \emph{maximally monotone} if there exists no monotone operator $B:\PrimS\rightarrow 2^\PrimS$ such that $\gra{B}$ properly contains $\gra{A}$. The \emph{zero-set} of the operator $A$ is defined as $\zer{A} \defeq \{x\in\PrimS: 0\in Ax\}.$

For $\beta > 0$, a single-valued operator $T\colon\PrimS\to\PrimS$ is said to be \emph{$\tfrac{1}{\beta}$-cocoercive} with respect to \ $\norm{\cdot}_M$ with $M\in\posop$ if
\begin{equation*}
    \inpr{Tx -Ty}{x-y}\geq\tfrac{1}{\beta}\norm{Tx-Ty}_{M^{-1}}^2\qquad(\forall x,y\in\PrimS).
\end{equation*}

\subsection{Problem statement}\label{sec:FB-problem-formulation}
We consider structured monotone inclusion problems of the form
\begin{align} \label{eq:monotone_inclusion}
    0 \in Ax + Cx,
\end{align}
that satisfy the following assumption.

\begin{assumption}
\label{assum:monotone_inclusion}
Assume that
\renewcommand{\labelenumi}{\emph{(\roman{enumi})}}
\begin{enumerate}
    \item $A: \PrimS \rightarrow 2^{\PrimS}$ is maximally monotone.
    \item $C:\PrimS \rightarrow \PrimS$ is $\tfrac{1}{\beta}$-cocoercive with respect to $\norm{\cdot}_M$ with $M\in\mathcal{M}(\PrimS)$.
    \item The solution set $\zer{A+C}$ is nonempty.
\end{enumerate}
\end{assumption}

This assumption implies that the operator $A+C$ is maximally monotone \cite[Corollary 25.5]{bauschke2017convex}.

\subsection{Forward--backward splitting with deviations}
The FB algorithm with deviations is an extension of the standard FB algorithm and was introduced recently in \cite{nofob-increments}. In its most general form, two additive terms---called deviations---are added to the basic FB method to form extrapolations to the iterate. The algorithm uses the extrapolated points in the evaluation of the forward and the backward steps. If the deviations are chosen wisely, this can exhibit an improved convergence compared to standard FB splitting. \Cref{alg:PFOB} presents an instance of the FB algorithm with only one deviation vector.

\begin{algorithm}
	\caption{}
	\begin{algorithmic}[1]
	    \State \textbf{Input:} $x_0\in \PrimS$; and the sequences \seq{\gamma}{n}, \seq{\lambda}{n}, and \seq{\zeta}{n} according to \cref{assum:parameters}; and the metric $\norm{\cdot}_M$ with $M\in\posop$.
	    \State \textbf{set:} $y_0=x_0$ and $u_0=0$.
		\For {$n=0,1,2,\ldots$}
		    \State $p_n = (M+\gamma_nA)^{-1}\circ(M - \gamma_n C)(y_n)$
		    \State $x_{n+1} = x_n + \lambda_n(p_n - y_n)$
		    \State\label{step:pfob-norm-condition}choose $u_{n+1}$ such that
		    \begin{equation*}
            \begin{aligned}
                \norm{u_{n+1}}_{M}^2 \leq \zeta_{n}^2\tfrac{\lambda_n(4-2\lambda_n-\gamma_n\beta)(4-2\lambda_{n+1}-\gamma_{n+1}\beta)}{4\lambda_{n+1}}\norm{p_n-x_n+\tfrac{2\lambda_n+\gamma_n\beta-2}{4-2\lambda_n-\gamma_n\beta}u_n}_M^2
            \end{aligned}
            \end{equation*}\label{alg-line:alg1-bound_on_deviations}
		    \State$y_{n+1} = x_{n+1} + u_{n+1}$
		\EndFor
	\end{algorithmic}
\label{alg:PFOB}
\end{algorithm}

To ensure convergence of \cref{alg:PFOB}, the deviation $u_{n+1}$ must satisfy the iteration-dependent norm bound in \cref{alg-line:alg1-bound_on_deviations} at each iteration \cite{nofob-increments}. This bound is referred to as a \emph{norm condition}. The requirements on the parameters $\lambda_n$, $\gamma_n$, and $\zeta_n$ are collected in \cref{assum:parameters}.

\begin{assumption}\label{assum:parameters}
Choose $\epsilon\in\p{0, \min\set{1, \tfrac{4}{3+\beta}}}$, and assume that, for all $n\in\nat$, the following hold: 
\renewcommand{\labelenumi}{{(\roman{enumi})}}
\begin{enumerate}[noitemsep,ref=\cref{assum:parameters}~\theenumi]
\item $0 \leq \zeta_n \leq 1 - \epsilon$; \label{itm:assump-param-i}
\item $\epsilon \leq \gamma_n \leq \frac{4 - 3\epsilon}{\beta}$; and \label{itm:assump-param-ii} 
\item $\epsilon \leq \lambda_n \leq 2 - \frac{\gamma_n \beta}{2} - \frac{\epsilon}{2}$. \label{itm:assump-param-iii}
\end{enumerate}

\end{assumption}

The following result, which is adopted from \cite{nofob-increments}, provides a convergence guarantee for the iterates that are obtained from \cref{alg:PFOB}.

\begin{theorem}\label{thm:main}
Consider the monotone inclusion problem \eqref{eq:monotone_inclusion} and suppose that \cref{assum:monotone_inclusion,assum:parameters} hold. Let $(x_n)_{n\in\nat}$ be the sequence generated by \cref{alg:PFOB}. Then, the sequence $(x_n)_{n\in\nat}$ converges weakly to a point in $\zer{A+C}$.
\end{theorem}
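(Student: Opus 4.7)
The plan is to follow the standard energy--descent plus Opial's lemma template, adapted to the deviation framework. First I would translate the resolvent update into a variational form: from $p_n = (M + \gamma_n A)^{-1}(M - \gamma_n C)(y_n)$ and the definition of the resolvent, one has $\gamma_n^{-1} M(y_n - p_n) - C(y_n) \in A p_n$. Combined with $-Cx^\star \in A x^\star$ for any $x^\star \in \zer{A+C}$, monotonicity of $A$ gives
\[
\inpr{p_n - x^\star}{M(y_n - p_n)} \geq \gamma_n \inpr{p_n - x^\star}{C(y_n) - C(x^\star)},
\]
and cocoercivity of $C$ in $\norm{\cdot}_M$ then controls the right-hand side by a combination of $\norm{y_n - x^\star}_M^2$ and $\norm{C(y_n) - C(x^\star)}_{M^{-1}}^2$ terms.

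The core step is to identify a Lyapunov function whose descent is exactly encoded by the norm condition in line~6. A natural candidate is
\[
V_n \;=\; \norm{x_n - x^\star}_M^2 \;+\; \theta_n\,\norm{p_{n-1}-x_{n-1}+\alpha_{n-1}u_{n-1}}_M^2,
\]
with $\theta_n = \tfrac{\lambda_{n-1}(4-2\lambda_{n-1}-\gamma_{n-1}\beta)(4-2\lambda_n-\gamma_n\beta)}{4\lambda_n}$ and $\alpha_{n-1} = \tfrac{2\lambda_{n-1}+\gamma_{n-1}\beta-2}{4-2\lambda_{n-1}-\gamma_{n-1}\beta}$, precisely the coefficients that appear in the norm condition. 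Expanding $\norm{x_{n+1}-x^\star}_M^2$ using $x_{n+1}=x_n+\lambda_n(p_n-y_n)$ and $y_n=x_n+u_n$, substituting the variational inequality and the cocoercivity estimate, and then using the norm condition at iteration $n$ to absorb the $\theta_{n+1}$-term of $V_{n+1}$, I expect to arrive at an inequality of the form
\[
V_{n+1} \;+\; (1-\zeta_n^2)\,\theta_{n+1}\norm{p_n-x_n+\alpha_n u_n}_M^2 \;+\; \kappa_n \norm{p_n-y_n}_M^2 \;\leq\; V_n,
\]
for some $\kappa_n > 0$ guaranteed by \cref{assum:parameters}. The main obstacle I anticipate is this algebraic bookkeeping: the exact rational expressions in the norm condition are engineered so that the cross-terms cancel and the telescoping closes, and verifying this is really the substantive part of the proof.

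From the descent inequality, $(\norm{x_n - x^\star}_M)_{n\in\nat}$ is non-increasing, so $(x_n)$ is bounded and Fejér-monotone with respect to $\zer{A+C}$ in the $M$-norm, and the two nonnegative residual terms are summable. Summability of $\norm{p_n-y_n}_M^2$ gives $p_n - y_n \to 0$ strongly, and summability of $\norm{p_n-x_n+\alpha_n u_n}_M^2$ together with $\zeta_n \leq 1-\epsilon$ and the norm condition forces $\norm{u_n}_M \to 0$, whence $y_n - x_n \to 0$ and $p_n - x_n \to 0$ as well.

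Finally I would invoke Opial's lemma in $\norm{\cdot}_M$. Given any weakly convergent subsequence $x_{n_k} \rightharpoonup \bar x$, the relations above imply $p_{n_k}, y_{n_k} \rightharpoonup \bar x$. Cocoercivity of $C$ implies Lipschitz continuity between the $\norm{\cdot}_M$ and $\norm{\cdot}_{M^{-1}}$ norms, so $C(y_{n_k}) - C(p_{n_k}) \to 0$ strongly. The pair
\[
\bigl(p_{n_k},\; \gamma_{n_k}^{-1} M(y_{n_k} - p_{n_k}) - C(y_{n_k}) + C(p_{n_k})\bigr) \in \gra{A+C}
\]
converges weakly-strongly to $(\bar x, 0)$, and maximal monotonicity of $A+C$ (\cite[Cor.~25.5]{bauschke2017convex}) yields $\bar x \in \zer{A+C}$. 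Fejér monotonicity together with the fact that every weak cluster point lies in $\zer{A+C}$ then gives weak convergence of the whole sequence $(x_n)$ to a point in $\zer{A+C}$.
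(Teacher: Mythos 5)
Your high-level strategy---a Fej\'er-type descent inequality followed by demiclosedness of $\gra{A+C}$ and Opial's lemma---is the standard template, but it is not what the paper does, and as written it has a genuine gap at its center. The paper's proof of \cref{thm:main} is a two-line reduction: setting $z_n=y_n$ in \cite[Algorithm~1]{nofob-increments} shows that \cref{alg:PFOB} is a special instance of the forward--backward method with deviations (the second deviation becoming $v_n=\tfrac{2-\gamma_n\beta}{2-\lambda_n\gamma_n\beta}u_n$), so the conclusion is imported directly from \cite[Theorem~1]{nofob-increments}. You instead set out to reprove that theorem from scratch. The parts you do carry out are sound: the variational reformulation of the resolvent step, the use of monotonicity against $-Cx^\star\in Ax^\star$, the deduction of $u_n\to 0$ and $p_n-x_n\to 0$ from summability of the residuals, and the weak--strong closedness argument identifying cluster points as zeros of $A+C$ would all survive scrutiny.

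The gap is the descent inequality itself. You write that you ``expect to arrive at''
\[
V_{n+1}+(1-\zeta_n^2)\,\theta_{n+1}\norm{p_n-x_n+\alpha_n u_n}_M^2+\kappa_n\norm{p_n-y_n}_M^2\;\leq\;V_n
\]
with $\kappa_n>0$, but this inequality \emph{is} the nontrivial content of the theorem; everything downstream of it is routine. To close it you must actually expand $\normsq{x_{n+1}-x^\star}_M$ using $x_{n+1}=x_n+\lambda_n(p_n-y_n)$ and $y_n=x_n+u_n$, show how the $\normsq{u_{n+1}}_M$ cost incurred at stage $n+1$ is absorbed by the surplus left at stage $n$ via the norm condition in \cref{step:pfob-norm-condition}, and verify that \cref{assum:parameters} (in particular $\lambda_n\le 2-\gamma_n\beta/2-\epsilon/2$ and $\zeta_n\le 1-\epsilon$) renders every remaining coefficient nonnegative and bounded away from zero. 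You explicitly defer this ``algebraic bookkeeping,'' so the proposal is a plausible proof plan rather than a proof. (A minor additional inaccuracy: cocoercivity controls the cross term by $\norm{Cy_n-Cx^\star}_{M^{-1}}^2$ together with a Young-inequality term in $\norm{p_n-y_n}_M^2$, not by $\norm{y_n-x^\star}_M^2$.) If you want a self-contained argument you must do that computation; the efficient alternative is the paper's route of exhibiting \cref{alg:PFOB} as an instance of \cite[Algorithm~1]{nofob-increments}, whose Theorem~1 contains precisely the Lyapunov analysis you are sketching.
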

\begin{proof}
In the FB splitting with deviations \cite[Algorithm~1]{nofob-increments}, set $z_n = y_n$. This gives the relation
\begin{align*}
    v_{n} = \tfrac{2-\gamma_n\beta}{2-\lambda_n\gamma_n\beta}u_n
\end{align*}
between $u_n$ and $v_n$, which yields  \cref{alg:PFOB}. Therefore, \cref{alg:PFOB} is an instance of the FB splitting algorithm with deviations; consequently, \cref{thm:main} is a direct consequence of \cite[Theorem~1]{nofob-increments}.
\end{proof}

There is a great flexibility in the choice of deviation vector $u_{n+1}$. This flexibility has not been fully explored in \cite[Section~6]{nofob-increments}, where only a simple momentum direction has been considered. Our proposed method is an instance of \cref{alg:PFOB} from \cite{nofob-increments}, where the deviations are chosen based on ideas from the extrapolation step of Anderson acceleration with the goal of improving local performance while benefiting from the global convergence properties of \cref{alg:PFOB}.

\subsection{Regularized Anderson acceleration}
\label{subsec:Anderson}
Consider the following fixed-point problem 
\begin{equation} \label{eq:fixedPointIteration}
    \text{find~} x\in\PrimS \text{~such that~} x = T(x),
\end{equation}
where $T\colon \PrimS \to \PrimS$ is a nonexpansive mapping. One way to solve this problem is to use \emph{Anderson acceleration} \cite{Anderson1965IterativePF, walker2011anderson}. Anderson acceleration is easy to implement and often improves the convergence of fixed-point iterations, particularly in their terminal phase of convergence, i.e., when close to a solution. However, Anderson acceleration (in its original form \cite{Anderson1965IterativePF, walker2011anderson}) suffers from numerical instability. This issue can, to some extent, be addressed by adding a Tikhonov regularization term to its inner least-squares problem. A regularized formulation of Anderson acceleration is given in \cref{alg:AA} \cite{scieur2020regularized,shi2019regularized}. In spite of their popularity and benefits, there are not yet any global convergence results for the pure Anderson acceleration or its regularized variant, to the best of our knowledge.

\begin{algorithm}
	\caption{Regularized Anderson acceleration}
	\begin{algorithmic}[1]
	    \State \textbf{Input:} $y_0\in\PrimS$; $m\geq1$; and the  regularization parameter $\xi$.
		\For {$n=0,1,2,\ldots$}
		    \State $m_n = \min\{m,n\}$
		    \State $x_{n} = T(y_{n})$
		    \State find $\alpha^{(n)} = (\alpha_0^{(n)},\ldots,\alpha_{m_n}^{(n)})$ that solves
		        \begin{align*}              &\underset{~~\alpha^{(n)}\in\reals^{m_n+1}}{\mathrm{minimize}} ~~~ \norm{\mathscr{R}_n\alpha^{(n)}}_2^2 + \xi\norm{\mathscr{R}_n^T\mathscr{R}_n}_F\norm{\alpha^{(n)}}_2^2 \\
		        &\mathrm{~~subject ~ to ~~~~}  \mathbf{1}^T\alpha^{(n)}=1
		        \end{align*}
		        \hspace{5.2mm}where $\mathscr{R}_n = (r_{n-m_n},\ldots,r_n)$ and $r_{j} = y_{j} - x_{j}$ for $j\in\{n-m_n,\ldots,n\}$ \label{step:aa-least-squares-problem}
		    \State  $y_{n+1}=\sum_{i=0}^{m_n}\alpha_{i}^{(n)}x_{n-m_n+i}$\label{step:aa-extrapolated-step}
		\EndFor
	\end{algorithmic}
\label{alg:AA}
\end{algorithm}

Anderson acceleration is retrieved from \cref{alg:AA} by setting $\xi=0$. The original formulation of Anderson acceleration \cite{Anderson1965IterativePF} is more general as it allows for the following damped (mixed) step to be taken
\begin{align*}
	y_{n+1} = \mu_n\sum_{i=0}^{m_n}\alpha_{i}^{(n)}x_{n-m_n+i} + (1-\mu_n)\sum_{i=0}^{m_n}\alpha_{i}^{(n)}y_{n-m_n+i},
\end{align*}
instead of \cref{step:aa-extrapolated-step}, in which $\mu_n>0$ is the damping (mixing) parameter. In this work, we consider the regularized variant of Anderson acceleration, given in \cref{alg:AA}, and refer to it as RAA.

\begin{remark}{\label{rem:QN-AA}}
Anderson acceleration (\cref{alg:AA} with $\xi=0$) can be viewed as a  quasi-Newton method \cite{eyert1996comparative,fang2009two, walker2011anderson,zhang2020globally}. To see this, first observe that the inner optimization problem of Anderson acceleration can be written as the following unconstrained least-squares problem
\begin{align}\label{eq:inner_opt_AA_unconstrained}
    \underset{\omega^{(n)}\in\reals^{m_n}}{\mathrm{minimize}} ~~~ \norm{r_n- \Delta\mathscr{R}_n\omega^{(n)}}_2,
\end{align}
where $\Delta\mathscr{R}_n = (r_{n-m_n+1}-r_{n-m_n},\ldots,r_{n}-r_{n-1})$ and $\omega^{(n)} = (\omega_0^{(n)},\ldots,\omega_{m_n-1}^{(n)})$ with $\omega_i^{(n)} = \sum_{j=0}^i\alpha_j^{(n)}$ for $i\in\{0,\ldots,m_{n}-1\}$. Then, defining $\Delta\mathscr{Y}_n = (y_{n-m_n+1}-y_{n-m_n},\ldots,y_{n}-y_{n-1})$, the extrapolation step of AA can be cast as
\begin{align*}
    y_{n+1}= y_n -  G_n r_n
\end{align*}
where $G_n = \Id+(\Delta\mathscr{Y}_n-\Delta\mathscr{R}_n)(\Delta\mathscr{R}_n^T\Delta\mathscr{R}_n)^{-1}\Delta\mathscr{R}_n^T$. In this framework, Anderson acceleration can be seen a quasi-Newton method where $G_n$ is an approximate inverse Jacobian of $x-T(x)$ that minimizes $\Vert G_n-I\Vert_F$ subject to the inverse multi-secant condition $G_n\Delta\mathscr{Y}_n=\Delta\mathscr{R}_n$.
\end{remark}

\section{Dynamically weighted inertial FB scheme}\label{sec:main_alg}
In this section, we present a dynamically weighted inertial forward--backward (\dwifob{}) scheme to solve the problem introduced in \cref{sec:FB-problem-formulation}. It is based on \cref{alg:PFOB} with a choice of deviation vectors inspired by RAA (\cref{alg:AA}).

The \dwifob{} scheme exploits a history of search directions similar to RAA to find a deviation vector, and it uses the norm condition in~\cref{step:pfob-norm-condition} of \cref{alg:PFOB} to bound the norm of the deviation. This results in an algorithm that addresses the drawbacks of \cref{alg:PFOB} (slow local convergence) and RAA (no global convergence guarantee) and benefits from their favorable properties; namely, global convergence of \cref{alg:PFOB} and the often fast local convergence of RAA.

\begin{algorithm}[h]
	\caption{\dwifob{}}
	\begin{algorithmic}[1]
	    \State \textbf{Input:} $x_0\in\PrimS$; $m\geq1$;  the sequences \seq{\lambda}{n}, \seq{\gamma}{n}, and \seq{\zeta}{n} as defined in \cref{assum:parameters}; the regularization parameter $\xi$; the metric $\norm{\cdot}_M$ with $M\in\posop$; and $\varepsilon\geq0$.
	    \State \textbf{set} $y_0 = x_0$  and $u_0=0$.
		\For {$n=0,1,2,\ldots$}
		    \State $m_n = \min(m,n)$
		    \State $p_{n}=(M+\gamma_nA)^{-1}\circ (M-\gamma_nC)y_{n}$
		    \State $x_{n+1}=x_{n}+\lambda_{n}(p_{n}-y_{n})$
		    \State\label{step:dwifob-least-squares}find $\alpha^{(n)} = (\alpha_0^{(n)},\ldots,\alpha_{m_n}^{(n)})$ that solves
		    \begin{align*}
            &\underset{~~\alpha^{(n)}\in\reals^{m_n+1}}{\mathrm{minimize}} ~~~ \norm{\mathscr{R}_n\alpha^{(n)}}_2^2 + \xi\norm{\mathscr{R}_n^T\mathscr{R}_n}_F\norm{\alpha^{(n)}}_2^2 \\
		            &\mathrm{~~subject ~ to ~~~~}  \mathbf{1}^T\alpha^{(n)}=1
            \end{align*}
            \hspace{5.2mm}where $\mathscr{R}_n = (r_{n-m_n},\ldots,r_n)$ and $r_j = x_{j+1}-y_j$
		    \State $\widehat{u}_{n+1}=x_{n+1}-\sum_{i=0}^{m_n}\alpha_{i}^{(n)}x_{n-m_n+i+1}$
		    \State $\ell_{n}^2=\tfrac{\lambda_n(4-2\lambda_n-\gamma_n\beta)(4-2\lambda_{n+1}-\gamma_{n+1}\beta)}{4\lambda_{n+1}}\norm{p_n-x_n+\tfrac{2\lambda_n+\gamma_n\beta-2}{4-2\lambda_n-\gamma_n\beta}u_n}_M^2$
		    \State $u_{n+1}=\zeta_n|\ell_n|\tfrac{\widehat{u}_{n+1}}{\varepsilon+\norm{\widehat{u}_{n+1}}_M}$ \label{alg-step:scaling}
		    \State $y_{n+1} = x_{n+1}+u_{n+1}$
		\EndFor
	\end{algorithmic}
\label{alg:DWIFOB}
\end{algorithm}

The convergence of \dwifob{} follows from \cref{thm:main}, that shows the convergence of \cref{alg:PFOB}, of which \dwifob{} is a special instance with a specific class of deviations.

\begin{corollary}
Consider the monotone inclusion problem \eqref{eq:monotone_inclusion} and suppose that \cref{assum:monotone_inclusion,assum:parameters} hold. Let  $(x_n)_{n\in\nat}$ be the sequence generated by \cref{alg:DWIFOB}. Then, the sequence $(x_n)_{n\in\nat}$ converges weakly to a point in the solution set $\zer{A+C}$.
\end{corollary}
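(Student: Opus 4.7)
The plan is to recognize that \cref{alg:DWIFOB} is nothing more than a particular instantiation of \cref{alg:PFOB} with a specifically designed deviation $u_{n+1}$, and to then invoke \cref{thm:main}. Comparing the two algorithms line by line, only the construction of $u_{n+1}$ differs; the resolvent evaluation producing $p_n$, the relaxation giving $x_{n+1}$, and the extrapolation $y_{n+1}=x_{n+1}+u_{n+1}$ coincide. Hence the entire task reduces to verifying that the $u_{n+1}$ produced in \cref{alg:DWIFOB} satisfies the norm condition in \cref{step:pfob-norm-condition} of \cref{alg:PFOB}.

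The central step I would carry out is to match the scalar $\ell_n^2$ in \cref{alg:DWIFOB} with the coefficient multiplying $\zeta_n^2$ on the right-hand side of the inequality labelled \cref{alg-line:alg1-bound_on_deviations}; by definition these are identical. Thus the norm condition reduces to showing $\norm{u_{n+1}}_M^2 \leq \zeta_n^2 \ell_n^2$. From the scaling step in \cref{alg-step:scaling}, I would compute directly
\begin{equation*}
    \norm{u_{n+1}}_M \;=\; \zeta_n\,|\ell_n|\,\frac{\norm{\widehat u_{n+1}}_M}{\varepsilon+\norm{\widehat u_{n+1}}_M} \;\leq\; \zeta_n\,|\ell_n|,
\end{equation*}
where the inequality uses $\varepsilon \geq 0$; squaring delivers the required bound. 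In the degenerate case $\widehat u_{n+1}=0=\varepsilon$, the convention $u_{n+1}=0$ trivially satisfies the bound.

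With the norm condition in hand, \cref{assum:monotone_inclusion,assum:parameters} provide all the remaining hypotheses of \cref{thm:main}, so weak convergence of $(x_n)_{n\in\nat}$ to a point in $\zer{A+C}$ follows immediately. I do not anticipate a substantive obstacle: the scaling step in \cref{alg-step:scaling} is deliberately engineered so that any candidate direction $\widehat u_{n+1}$ arising from the Anderson-type least-squares problem in \cref{step:dwifob-least-squares} is rescaled to lie inside the ball prescribed by the norm condition. This is precisely what lets \dwifob{} inherit the global convergence guarantee of \cref{alg:PFOB} while remaining free to exploit the Anderson-style extrapolation direction for improved local behavior.
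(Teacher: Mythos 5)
Your proposal is correct and follows exactly the route the paper takes (the paper gives no separate proof, only the remark that \dwifob{} is an instance of \cref{alg:PFOB} with a specific class of deviations, so \cref{thm:main} applies). Your explicit verification that the scaling in \cref{alg-step:scaling} forces $\norm{u_{n+1}}_M^2 \leq \zeta_n^2\ell_n^2$, i.e.\ the norm condition of \cref{step:pfob-norm-condition}, is precisely the intended justification, and your handling of the degenerate case $\widehat{u}_{n+1}=0=\varepsilon$ is a sensible addition.
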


\section{Primal--dual variant of \dwifob{}}
\label{sec:pd-DWIFOB}

In this section, we consider a specific type of monotone inclusion problems that, after being translated to a primal--dual framework, can be efficiently tackled by \dwifob{}. We propose a primal--dual algorithm based on \cref{alg:DWIFOB} for solving such problems.

\paragraph{Problem statement.} We consider primal inclusion problems of finding $x\in\PrimS$ such that
\begin{align} \label{eq:inclusion_CondatVu}
    0 \in Ax + L^*B(Lx) + Cx
\end{align}
with the following assumptions.

\begin{assumption}
\label{assum:primal_dual_CondatVu}
Assume that
\renewcommand{\labelenumi}{\emph{(\roman{enumi})}}
\begin{enumerate}
    \item $A:\PrimS \rightarrow 2^{\PrimS}$ is a maximally monotone operator;
    \item $B:\mathcal{K} \rightarrow 2^{\mathcal{K}}$ is a maximally monotone operator;
    \item $L:\PrimS \rightarrow \mathcal{K}$ is a bounded linear operator;
    \item $C:\PrimS \rightarrow \PrimS$ is a $\tfrac{1}{\beta}$-cocoercive operator with respect to the metric $\|\cdot \|$;
    \item The solution set $\zer{A+L^*BL+C}$ is nonempty.
\end{enumerate}
\end{assumption}

\paragraph{Translation to a primal--dual framework.} The inclusion problem \eqref{eq:inclusion_CondatVu} can be translated to a primal--dual setting \cite{he2012convergence} to get the inclusion problem
\begin{align} \label{eq:primal_dual_inclusion}
    0\in\mathcal{A}z + \mathcal{C}z
\end{align}
in which, with some abuse of notation,
\begin{align}\label{eq:A_C_caligraphic}
   \mathcal{A} = \begin{bmatrix}A & L^*\\-L & B^{-1}\end{bmatrix} && \mathcal{C} = \begin{bmatrix}C & 0\\0 & 0\end{bmatrix} 
\end{align}
and $z \defeq (x,\mu) \in \PrimS \times \mathcal{K}$ is a primal--dual pair. It holds that $x$ is a solution to \eqref{eq:inclusion_CondatVu} if and only if there exists some $\mu \in \mathcal{K}$ such that $z = (x, \mu)$ is a solution to \eqref{eq:primal_dual_inclusion}.

In this setting, the operator $\mathcal{A}$ is a maximally monotone  \cite[Proposition 26.32]{bauschke2017convex} and  the operator $\mathcal{C}$ is $1/\beta$-cocoercive with respect to the norm $\norm{\cdot}_{{M}}$, with 
\begin{align}\label{eq:VuCondat_M}
    M = \begin{bmatrix}
        I & -\tau L^*\\
        -\tau L & \tau\sigma^{-1}I
    \end{bmatrix},
\end{align}
where $\tau>0$ and $\sigma>0$ are chosen such that $\sigma\tau\|L\|^2 < 1$, which ensures that $M$ is strictly positive. Therefore, the inclusion problem \eqref{eq:primal_dual_inclusion} can be solved using the \dwifob{} algorithm. \Cref{alg:PD-DWIFOB} describes our primal--dual \dwifob{} algorithm which is derived by a straightforward application of \dwifob{} to \eqref{eq:primal_dual_inclusion}. With  $C=0$ and $m=1$, this algorithm is equivalent to \cite[Algorithm~4]{nofob-increments}, an inertial primal--dual algorithm.

\begin{algorithm}[h!]
	\caption{}
	\begin{algorithmic}[1]
	    \State \textbf{Input:} $(x_0,\mu_0)\in\PrimS\times\mathcal{K}$; $m\geq1$;  the sequences \seq{\lambda}{n} and \seq{\zeta}{n} as defined in \cref{assum:parameters}; the regularization parameter $\xi$; $\sigma>0, \tau>0$ such that $\sigma\tau\|L\|^2 < 1$; and $\varepsilon\geq0$.
	    \State \textbf{set} $(\widehat{x}_0,\widehat{\mu}_0) = (x_0,\mu_0)$  and $(u_{x,0},u_{\mu,0})=(0,0)$
		\For {$n=0,1,2,\ldots$}
		    \State $m_n = \min(m,n)$
		    \State $p_{x,n}=J_{\tau A}(\widehat{x}_n-\tau L^*\widehat{\mu}_n-\tau C\widehat{x}_n)$ \label{alg-line:alg4-res1}
		    \State $p_{\mu,n}=J_{\sigma B^{-1}}\lp\widehat{\mu}_n+\sigma{L}(2p_{x,n}-\widehat{x}_n)\rp$ \label{alg-line:alg4-res2}
		    \State $x_{n+1}=x_{n}+\lambda_{n}(p_{x,n}-\widehat{x}_{n})$ \label{alg-line:alg4-relax1}
		    \State $\mu_{n+1}=\mu_{n}+\lambda_{n}(p_{\mu,n}-\widehat{\mu}_{n})$ \label{alg-line:alg4-relax2}
		    \State find $\alpha^{(n)} = (\alpha_0^{(n)},\ldots,\alpha_{m_n}^{(n)})$ that solves
		    \begin{equation*}
                \begin{aligned}
            &\underset{~~\alpha^{(n)}\in\reals^{m_n+1}}{\mathrm{minimize}} ~~~ \norm{\mathscr{R}_n\alpha^{(n)}}_2^2 + \xi\norm{\mathscr{R}_n^T\mathscr{R}_n}_F\norm{\alpha^{(n)}}_2^2 \\
		            &\mathrm{~~subject ~ to ~~~~}  \mathbf{1}^T\alpha^{(n)}=1
            \end{aligned}
            \end{equation*}
            \hspace{5.2mm}where $\mathscr{R}_n = (r_{n-m_n},\ldots,r_n)$ where $r_j = (x_{j+1}-\widehat{x}_j,\mu_{j+1}-\widehat{\mu}_j)$ 
		    \State $\begin{bmatrix}\widehat{u}_{x,n+1}\\\widehat{u}_{\mu,n+1}\end{bmatrix}=\begin{bmatrix}x_{n+1}\\\mu_{n+1}\end{bmatrix}-\sum_{i=0}^{m_n}{\alpha_{i}^{(n)}\begin{bmatrix}x_{n-m_n+i+1}\\\mu_{n-m_n+i+1}\end{bmatrix}}$ \label{alg-line:alg4-uhat}
		    \State $\ell_{n}^2=\tfrac{\lambda_n(4-2\lambda_n-\tau\beta)(4-2\lambda_{n+1}-\tau\beta)}{4\lambda_{n+1}}\norm{\begin{bmatrix}p_{x,n}\\p_{\mu,n}\end{bmatrix}-\begin{bmatrix}x_{n}\\\mu_{n}\end{bmatrix}+\tfrac{2\lambda_n+\tau\beta-2}{4-2\lambda_n-\tau\beta}\begin{bmatrix}u_{x,n}\\u_{\mu,n}\end{bmatrix}}_M^2$ \label{alg-line:alg4-ell}
		    \State $\begin{bmatrix}u_{x,n+1}\\u_{\mu,n+1}\end{bmatrix}=\tfrac{\zeta_n\vert\ell_n\vert}{\varepsilon+\norm{(\widehat{u}_{x,n+1},\widehat{u}_{{\mu},n+1})}_M}\begin{bmatrix}\widehat{u}_{x,n+1}\\\widehat{u}_{\mu,n+1}\end{bmatrix}$ \label{alg-line:alg4-u}
		    \State $\widehat{x}_{n+1} = x_{n+1}+u_{x,n+1}$ \label{alg-line:alg4-extraploation}
		    \State $\widehat{\mu}_{n+1} = \mu_{n+1}+u_{\mu,n+1}$
		\EndFor
	\end{algorithmic}
\label{alg:PD-DWIFOB}
\end{algorithm}

The following is a result on weak convergence of the iterates generated by \cref{alg:PD-DWIFOB}. It is based on showing that \cref{alg:PD-DWIFOB} is a special case of the weakly convergent \cref{alg:PFOB}. 

\begin{corollary}\label{cor:pd_dwifob_var1}
Consider the monotone inclusion problem \eqref{eq:inclusion_CondatVu} under \cref{assum:primal_dual_CondatVu} and suppose that \cref{assum:parameters} holds. Then the sequence \seq{x}{n} in \cref{alg:PD-DWIFOB} converges weakly to a point in $\zer{A+L^*BL+C}$.
\end{corollary}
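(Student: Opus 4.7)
The plan is to recognize Algorithm~4 as a direct instance of Algorithm~1 applied to the primal--dual inclusion \eqref{eq:primal_dual_inclusion} in the product Hilbert space $\mathcal{H}\times\mathcal{K}$ endowed with the metric $\|\cdot\|_M$ from \eqref{eq:VuCondat_M}, and then to invoke \cref{thm:main}. Weak convergence of the lifted iterate $z_n=(x_n,\mu_n)$ will then yield weak convergence of $x_n$ by continuity of the coordinate projection.

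First, I would verify that \cref{assum:monotone_inclusion} is satisfied by $\mathcal{A},\mathcal{C}$ in $(\mathcal{H}\times\mathcal{K},\|\cdot\|_M)$. The assumption $\sigma\tau\|L\|^2<1$ implies $M\in\mathcal{M}(\mathcal{H}\times\mathcal{K})$ (Schur complement), maximal monotonicity of $\mathcal{A}$ is standard (e.g., \cite[Proposition~26.32]{bauschke2017convex}), and the cocoercivity of $\mathcal{C}$ with respect to $\|\cdot\|_M$ with the appropriate constant follows from a block-matrix computation using the explicit form of $M^{-1}$ and the cocoercivity of $C$ from \cref{assum:primal_dual_CondatVu}. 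Nonemptiness of $\zer(\mathcal{A}+\mathcal{C})$ follows from nonemptiness of $\zer(A+L^*BL+C)$ and the equivalence between primal and primal--dual solutions stated above \eqref{eq:A_C_caligraphic}.

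Second, I would unfold the abstract resolvent $p_n=(M+\gamma_n\mathcal{A})^{-1}(M-\gamma_n\mathcal{C})\widehat{z}_n$ using the block structure of $\mathcal{A},\mathcal{C},M$ and verify that it reproduces \cref{alg-line:alg4-res1,alg-line:alg4-res2} of \cref{alg:PD-DWIFOB}; this is the familiar Gauss--Seidel-type derivation underlying Chambolle--Pock/Condat--Vu, the $(2p_{x,n}-\widehat{x}_n)$ argument in the dual resolvent being the characteristic fingerprint. Lines \ref{alg-line:alg4-relax1}--\ref{alg-line:alg4-relax2} implement the relaxation step of \cref{alg:PFOB}; \cref{alg-line:alg4-uhat,alg-line:alg4-ell,alg-line:alg4-u} construct a deviation vector in the product space exactly according to the \dwifob{} template of \cref{alg:DWIFOB}; and the rescaling in \cref{alg-line:alg4-u} enforces the norm condition in \cref{step:pfob-norm-condition} of \cref{alg:PFOB}, with $\|\cdot\|_M$ inherited from the lifting.

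With all hypotheses of \cref{thm:main} in force on the lifted problem, it yields weak convergence of $z_n=(x_n,\mu_n)$ (weak convergence in $\|\cdot\|_M$ is equivalent to weak convergence in the canonical norm because $M$ is strongly positive) to some $z^\star=(x^\star,\mu^\star)\in\zer(\mathcal{A}+\mathcal{C})$; the coordinate projection is weakly continuous, and by the primal--dual equivalence $x^\star\in\zer(A+L^*BL+C)$. The main obstacle I anticipate is the cocoercivity verification for $\mathcal{C}$ with respect to the nonstandard metric $\|\cdot\|_M$: the interplay between $\tau$, $\sigma$, and $\|L\|$ must deliver precisely the constant matching the $\tau\beta$ factor appearing in the expression for $\ell_n^2$ in \cref{alg-line:alg4-ell}, so that the norm condition of \cref{alg:PFOB} translates verbatim to the bound implemented by \cref{alg-line:alg4-u}.
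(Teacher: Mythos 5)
Your proposal is correct and follows essentially the same route as the paper: identify \cref{alg:PD-DWIFOB} as an instance of \cref{alg:PFOB} applied to the lifted inclusion \eqref{eq:primal_dual_inclusion} in the product space with metric $\norm{\cdot}_M$ and constant step size $\gamma_n=\tau$, unfold the block resolvent to recover \cref{alg-line:alg4-res1,alg-line:alg4-res2}, check that the relaxation and deviation steps enforce the norm condition, and invoke \cref{thm:main}. You are somewhat more explicit than the paper about verifying \cref{assum:monotone_inclusion} for $(\mathcal{A},\mathcal{C},M)$ and about the weak continuity of the coordinate projection, which the paper delegates to the preamble of \cref{sec:pd-DWIFOB}.
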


\begin{proof}
Comparing \cref{alg:PD-DWIFOB}  with \cref{alg:PFOB}, we set $p_n=(p_{x,n},p_{\mu,n})$, $y_n=(\widehat{x}_n,\widehat{\mu}_n)$, define $\mathcal{A}$ and $\mathcal{C}$ as in \eqref{eq:A_C_caligraphic}, and let $M$ be defined as in \eqref{eq:VuCondat_M}. Then, we have the following update
\begin{align*}
    p_{n}=(p_{x,n},p_{\mu,n})&=(M + \tau\mathcal{A})^{-1}(M y_n - \tau\mathcal{C}y_n)\\
    &=\begin{bmatrix}I+\tau A&0\\-2\tau L&\tau\sigma^{-1}I+\tau B^{-1}\end{bmatrix}^{-1} \begin{bmatrix}\widehat{x}_n -\tau L^*\widehat{\mu}_n - \tau C\widehat{x}_n\\-\tau L\widehat{x}_n + \tau\sigma^{-1}\widehat{\mu}_n\end{bmatrix}\\
    &=\begin{bmatrix}
        (I+\tau A)^{-1}(\widehat{x}_n-\tau L^*\widehat{\mu}_n - \tau C\widehat{x}_n)\\
        (I+\sigma B^{-1})^{-1}(\widehat{\mu}_n+\sigma L(2p_{x,n}-\widehat{x}_n))
    \end{bmatrix}\\
    &= \begin{bmatrix} J_{\tau A} \left(\widehat{x}_n-\tau L^*\widehat{\mu}_n - \tau C\widehat{x}_n \right)\\J_{\sigma B^{-1}} \left(\widehat{\mu}_n+\sigma L(2p_{x,n}-\widehat{x}_n)\right) \end{bmatrix},
\end{align*}
which gives the resolvent steps of \cref{alg:PD-DWIFOB} (\cref{alg-line:alg4-res1,alg-line:alg4-res2}). Moreover, it is also straightforward to verify that,  by substituting $(x_{n+1},\mu_{n+1})$ in place of $x_{n+1}$ in \cref{alg:PFOB}, the relaxation steps of \cref{alg:PD-DWIFOB} (\cref{alg-line:alg4-relax1,alg-line:alg4-relax2})  are equivalent to that of \cref{alg:PFOB}. Additionally, with the devised choice of $u_{n+1}=(u_{x,n+1},u_{\mu,n+1})$ in \cref{alg:PD-DWIFOB}, the norm condition of \cref{alg:PFOB} holds. Therefore, since \cref{alg:PD-DWIFOB} is a special instance of \cref{alg:PFOB} and due to equivalence of \eqref{eq:inclusion_CondatVu} and \eqref{eq:primal_dual_inclusion}, a direct application of \cref{thm:main} concludes the proof.
\end{proof}

\begin{remark}\label{rem:CP}
For the choice of $\lambda_n=1$, $u_{x,n} = 0$ and $u_{\mu,n}=0$ for all $n\in\nat$ and $C = 0$,  \cref{alg:PD-DWIFOB} reduces to the standard Chambolle--Pock iteration \cite{chambolle2011first}, that is
\begin{align*}
    (x_{n+1},\mu_{n+1}) = \begin{bmatrix} J_{\tau A} \left({x_n}-\tau L^*{\mu_n} \right)\\
    J_{\sigma B^{-1}} \left({\mu}_n+\sigma L(2{x_{n+1}}-{x}_n)\right) \end{bmatrix}.
\end{align*}
\end{remark}

\subsection{Efficient evaluation of the $M$-induced norm}\label{subsec:efficient_evaluation}
In \cref{alg:PD-DWIFOB}, we need to evaluate two $M$-induced norms per iteration, where $M$ is given by \eqref{eq:VuCondat_M}. This means that, in addition to evaluating $L$ and $L^*$ in the resolvent steps, two extra evaluations each of $L$ and $L^*$ are needed due the $M$-induced norms. These extra evaluations can be computationally expensive, which would make the algorithm computationally inefficient. However, by utilizing a similar approach as in \cite[Section 6.1]{nofob-increments}, the extra evaluations can be efficiently done by reusing some of the previous computations.

We next show that we only need to apply $L$ and $L^*$ once per iteration (except for the first) in \cref{alg:PD-DWIFOB}. Observe that, by applying the operator $L$ on \cref{alg-line:alg4-relax1,alg-line:alg4-uhat,alg-line:alg4-extraploation} (after substitution of \cref{alg-line:alg4-u}) of  \cref{alg:PD-DWIFOB}, we obtain the following relations
\begin{equation}\label{eq:recursion}
    \begin{aligned}
        Lx_{n+1} &= Lx_{n} + \lambda_{n}(Lp_{x,n}-L\widehat{x}_n),\\
        L\widehat{u}_{x,n+1}&=Lx_{n+1}-\sum_{i=0}^{m_n}\alpha_i^{(n)}Lx_{n-m_n+i+1},\\
        L\widehat{x}_{n+1}&=Lx_{n+1}+\tfrac{\zeta_n|\ell_n|}{\varepsilon+\norm{(\widehat{u}_{x,n+1},\widehat{u}_{\mu,n+1})}_M}L\widehat{u}_{x,n+1}.
    \end{aligned}
\end{equation}
In these relations, for all $n>0$, we only need to evaluate $Lp_{x,n}$. The rest of the quantities to the right-hand sides of the above relations are already computed and can be reused. This means that, in practice, we only need to only evaluate one of each $L$ (for $Lp_{x,n}$) and $L^*$ (for  $L^*\widehat{\mu}_{n}$) at each iteration, except for the first. Therefore, since the most computationally expensive part of our algorithm often is evaluating $L$ and $L^*$, exploiting this technique keeps the computational cost of our algorithm similar to that of the Chambolle--Pock method. However, in order to use this approach, one needs to store $m_n+4$ vectors of the same dimension as the dual variable. Hence, in applications where storage is a bottleneck, using a large $m_n$ might be restrictive.

Evaluation of the $M$-induced norm of, for instance, $\norm{(\widehat{u}_{x,n},\widehat{u}_{\mu,n})}_M$ can be done  as
\begin{align}\label{eq:recursive_norm}
    \norm{(\widehat{u}_{x,n},\widehat{u}_{\mu,n})}_M^2 &= \norm{\widehat{u}_{x,n}}^2 + \tfrac{\tau}{\sigma}\norm{\widehat{u}_{\mu,n}}^2 - 2\tau\inpr{\widehat{u}_{\mu,n}}{L\widehat{u}_{x,n}},
\end{align}
where $L\widehat{u}_{x,n}$ is already available from the stored set of quantities. The other $M$-induced norm in \cref{alg-line:alg4-ell} of \cref{alg:PD-DWIFOB} can be computed in the same way as above without extra evaluations of $L$ or $L^*$.

\section{Numerical experiments}
\label{sec:num-exp}

In this section, we evaluate the performance of the primal--dual variant of the \dwifob{} algorithm and compare it with the Chambolle--Pock primal--dual method and RAA.

We consider a \emph{support vector machine} (SVM) problem with $l_1$-norm regularization for classification of the form
\begin{equation}\label{eq:l1_reg_svm}
    \underset{(w,b)\in\reals^d\times\reals}{\text{minimize}}~~\sum_{i=1}^N \max\left(0,1-\phi_i(w^T \theta_i + b)\right) + \delta\|w\|_1
\end{equation}
given a labeled training data set $\{(\theta_i,\phi_i) \}_{i=1}^N$, where $\theta_i\in\reals^d$ and $\phi_i\in\{-1,1\}$ are training data and labels respectively, $\delta>0$ is the regularization parameter, and $x=(w,b)$ with $b\in\reals$ and $w\in\reals^d$ is the decision variable. This problem can be reformulated as
\begin{align}\label{eq:l1_reg_svm_reform}
    \underset{ x\in \reals^{d+1} }{\text{minimize }} f(Lx)+g(x)
\end{align}
with
\begin{align*}
    f(y) = \sum_{i=1}^N \max\left(0,1-y_i\right),
    && g(x) = \delta\|\omega\|_1, && L = \begin{bmatrix} \phi_1\theta_1^T & \phi_1\\ \vdots & \vdots \\ \phi_N\theta_N^T & \phi_N\end{bmatrix},
\end{align*}
where $f$, $g \colon \reals^{d+1} \to \reals$ are proper, closed, and convex (and non-smooth) functions with full domain and $L$ is a bounded linear operator. A point $x^\star\in\reals^{d+1}$ solves problem~\eqref{eq:l1_reg_svm_reform} if and only if
\begin{equation}\label{eq:pd-optimality}
    0\in L^*\partial f(Lx) + \partial g(x),
\end{equation}
where $\partial f$ and $\partial g$ are the subdifferentials of $f$ and $g$, respectively \cite[Proposition~16.42]{bauschke2017convex}. By  \cite[Theorem~20.25]{bauschke2017convex},  $\partial f$ and $\partial g$ are maximally monotone. Therefore, we solve the monotone inclusion \eqref{eq:pd-optimality} in order to find a solution to problem~\eqref{eq:l1_reg_svm_reform}, which, by setting $A=\partial{g}$, $B=\partial{f}$, and $C=0$, fits into the framework of problem \eqref{eq:primal_dual_inclusion}. We use the following algorithms to solve the problem:
\begin{itemize}
    \item Chambolle and Pock's primal--dual method  (CP) \cite{chambolle2011first};
    \item The  primal--dual \dwifob{} method in \cref{alg:PD-DWIFOB} (Alg\labelcref{alg:PD-DWIFOB});
    \item Regularized Anderson acceleration (RAA), \cref{alg:AA}, \cite{scieur2020regularized,walker2011anderson}, applied to the fixed-point map of Chambolle--Pock, see \cref{rem:CP}.
\end{itemize}

In the algorithms listed above, evaluating $L$ and $L^*$ in the resolvent steps and solving the least-squares problem, if there is one,  are the computationally intensive parts. Since the Chambolle--Pock algorithm does not involve solving a least-squares problem, it has a cheaper per-iteration cost compared to the other algorithms. To provide a fair comparison, we compare the methods using \emph{scaled iterations}. Let $\ccp$ and $\calg$ be the average per-iteration computational cost of the Chambolle--Pock method and one of the algorithms mentioned above ($\mathrm{alg} \in \set{\mathrm{CP}, \text{Alg\labelcref{alg:PD-DWIFOB}}, \text{RAA}}$), respectively. The scaled iteration is the iteration count scaled by the ratio $\tfrac{\calg}{\ccp}$. The iteration costs $\ccp$ and $\calg$ are numerically approximated by measuring the average per-iteration elapsed time of the individual algorithms. The benefits of using the notion of scaled iteration are two-fold. In addition to considering the relative per-iteration computational cost of the algorithms, it eliminates the impact of computational capacity/power of the platform that the algorithms are implemented on, which makes the results more reproducible.

The experiments are done using three different benchmark datasets; the \emph{breast cancer} dataset  with 683 samples and 10 features, the \emph{sonar} dataset with 208 samples and 60 features, and \emph{colon cancer} dataset with 62 samples and 2000 features, all from \cite{chang2011libsvm}.  The numerical experiments are done on a laptop with a $1.4$ GHz Quad-core Intel Core i5 processor with $16$ GB of memory. The algorithms are implemented using the Julia programming language (Version 1.3.1).

In all experiments, the primal and the dual step-size parameters are chosen as $\tau=\sigma=0.99/\norm{L}^2$, $\zeta_n=0.99$ for all $n\in\nat$, $\varepsilon=0$, and a fixed relaxation parameter $\lambda = 1.0$ for \cref{alg:PD-DWIFOB} is used. Unless otherwise stated, the algorithms are initialized at $(x_0, \mu_0) = 0$. We report results from the numerical experiments in a sequence of figures. The $M$-induced distance to a solution is used as the convergence measure where the individual underlying solutions are found by running the standard Chambolle--Pock algorithm until  $\norm{x_n-x_{n-1}}\leq10^{-15}
$ and $\norm{\mu_n-\mu_{n-1}}\leq10^{-15}$. All algorithms that converge do so to the same solution. Moreover, all evaluations of $L$, $L^*$, and $\norm{\cdot}_M$ are done using the proposed recursive method of \cref{subsec:efficient_evaluation}, unless otherwise stated.

\begin{figure}
    \centering
    \begin{subfigure}{\textwidth}
    \centering
    \begin{tikzpicture}[scale=1]
        \node[text=black](title) at (0.05\linewidth,0.33\linewidth) {{\scriptsize CP vs.\ Alg\labelcref{alg:PD-DWIFOB} ($\lambda=1.0,m,\xi=10^{-5}$)}};
    \end{tikzpicture}
    \end{subfigure}
    \begin{subfigure}{0.49\textwidth}
    \centering
    \begin{tikzpicture}[scale=1.0]
      \node[inner sep=0pt] (fig) at (0,0) {\includegraphics[ width = 0.92\linewidth,keepaspectratio]{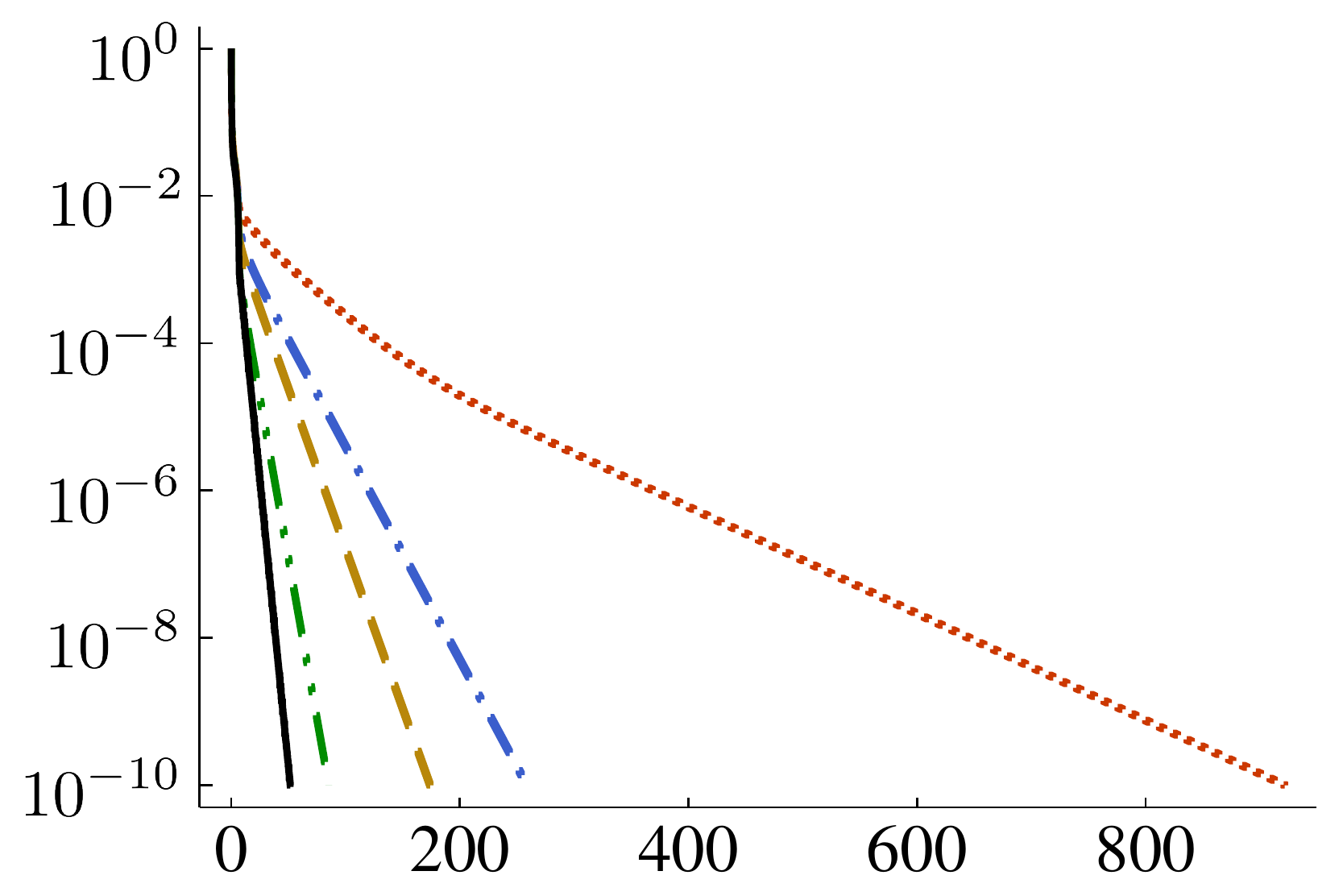}};
        \node[text=black](xlabel) at (0.05\linewidth,-.33\linewidth) {{\footnotesize $\times 10^3$ iteration}};
        \node[rotate=90, text=black] (ylabel) at (-.51\linewidth,0) {$\frac{\norm{(x_n,\mu_n)-(x^\star,\mu^\star)}_M}{\norm{(x_0,\mu_0)-(x^\star,\mu^\star)}_M}$};
    \end{tikzpicture}
    \end{subfigure}
    \begin{subfigure}{0.49\textwidth}
    \centering
    \begin{tikzpicture}[scale=1.0]
        \node[inner sep=0pt] (fig) at (0,0) {\includegraphics[ width = 0.92\linewidth,keepaspectratio]{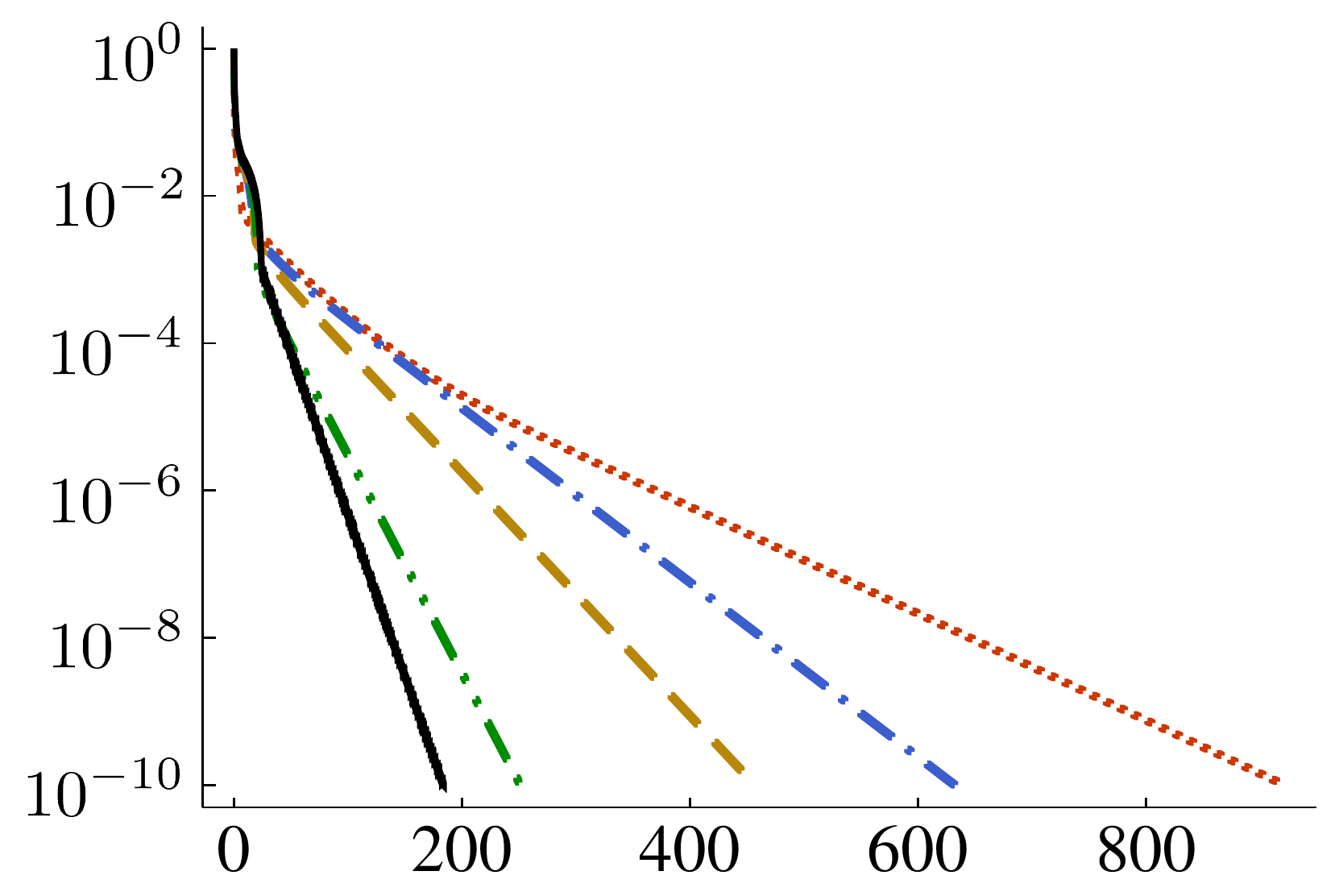}};
        \node[text=black](xlabel) at (0.05\linewidth,-.33\linewidth) {{\footnotesize $\times 10^3$ scaled iteration}};
    \end{tikzpicture}
    \end{subfigure}
    \begin{subfigure}{\textwidth}
    \centering
    \begin{tikzpicture}[scale=1]
        \matrix[draw=black, very thin]  at (-4,0) { 
            \draw[redd, dotted, very thick](0,0)--(0.54,0);&\node {{\scriptsize CP}};&
            \node {~~~~~};&
            \draw[bluee, dashdotted, very thick](0,0)--(0.5,0);&\node {{\scriptsize $m=3$}};&
            \node {~~~~~};&
            \draw[brownn, dashed, very thick](0,0)--(0.5,0);&\node {{\scriptsize $m=5$}};&
            \node {~~~~~};&
            \draw[greenn, dashdotdotted, very thick](0,0)--(0.5,0);&\node {{\scriptsize $m=10$}};&
            \node {~~~~~};&
            \draw[blackk, solid, very thick](0,0)--(0.5,0);&\node {{\scriptsize $m=15$}};\\
        };
    \end{tikzpicture}
    \end{subfigure}
\caption{Normalized $M$-induced distance to the solution vs.\ iteration number (\emph{left}) and scaled iteration number (\emph{right})  for the $l_1$-norm regularized SVM, problem \eqref{eq:l1_reg_svm}, with $\delta = 0.5$, on the \emph{breast cancer} dataset \cite{chang2011libsvm} with 683 samples and 10 features. Solved using the Chambolle--Pock algorithm and Alg\labelcref{alg:PD-DWIFOB} ($\lambda=1.0$, $m$, $\xi=10^{-5}$) for several memory sizes $m$, all with $\tau=\sigma = 0.99/\norm{L}$. }
\label{fig:CP_Alg4_m_breastCancer}
\end{figure}

\begin{figure}
    \centering
    \begin{subfigure}{\textwidth}
    \centering
    \begin{tikzpicture}[scale=1]
        \node[text=black](title) at (0.05\linewidth,0.33\linewidth) {{\scriptsize CP vs.\ Alg\labelcref{alg:PD-DWIFOB} ($\lambda=1.0,m,\xi=10^{-5}$)}};
    \end{tikzpicture}
    \end{subfigure}
    \begin{subfigure}{0.49\textwidth}
    \centering
    \begin{tikzpicture}[scale=1.0]
      \node[inner sep=0pt] (fig) at (0,0) {\includegraphics[ width = 0.92\linewidth,keepaspectratio]{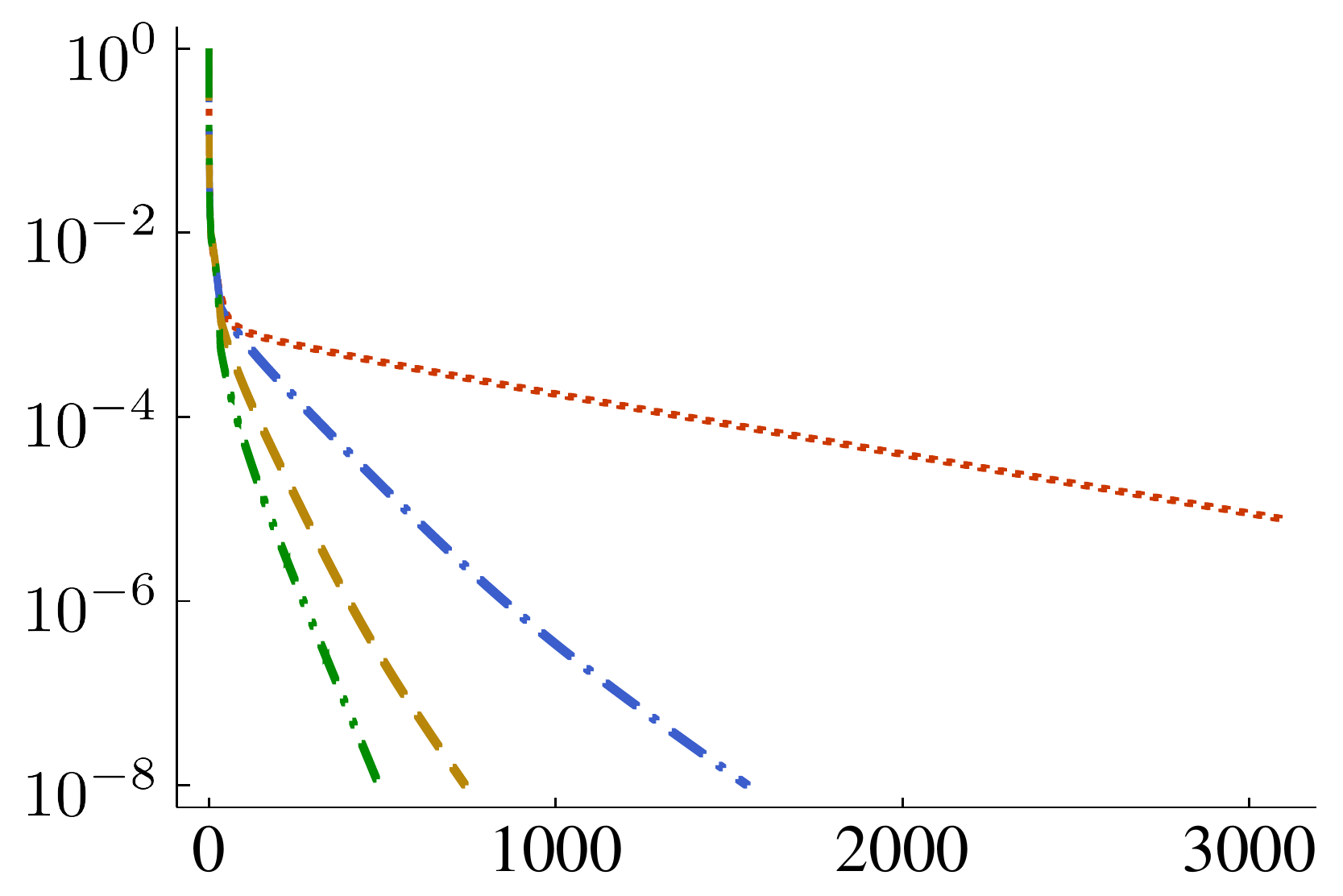}};
        \node[text=black](xlabel) at (0.05\linewidth,-.33\linewidth) {{\footnotesize $\times 10^3$ iteration}};
        \node[rotate=90, text=black] (ylabel) at (-.51\linewidth,0) {$\frac{\norm{(x_n,\mu_n)-(x^\star,\mu^\star)}_M}{\norm{(x_0,\mu_0)-(x^\star,\mu^\star)}_M}$};
    \end{tikzpicture}
    \end{subfigure}
    \begin{subfigure}{0.49\textwidth}
    \centering
    \begin{tikzpicture}[scale=1.0]
        \node[inner sep=0pt] (fig) at (0,0) {\includegraphics[ width = 0.92\linewidth,keepaspectratio]{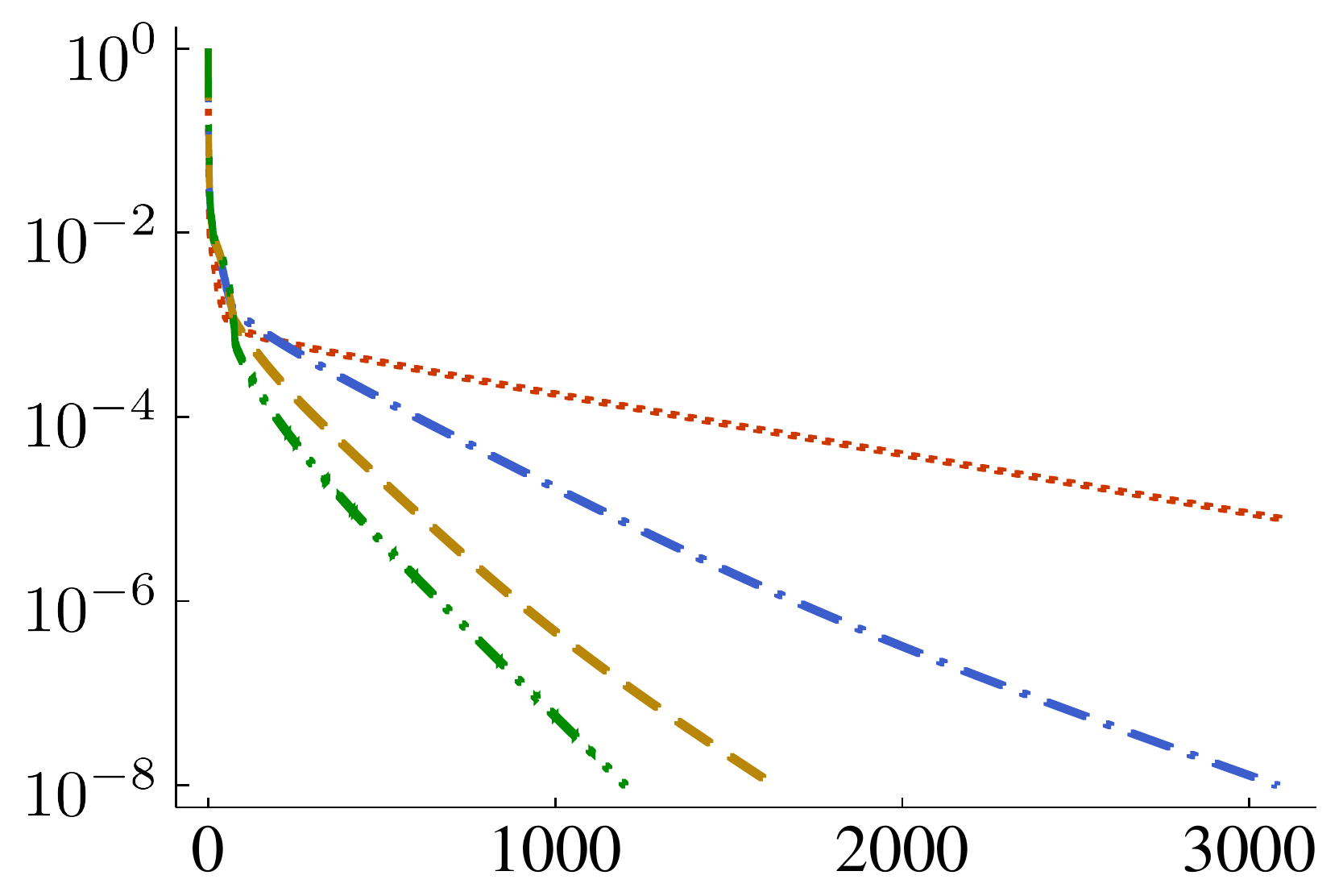}};
        \node[text=black](xlabel) at (0.05\linewidth,-.33\linewidth) {{\footnotesize $\times 10^3$ scaled iteration}};
    \end{tikzpicture}
    \end{subfigure}
    \begin{subfigure}{\textwidth}
    \centering
    \begin{tikzpicture}[scale=1]
        \matrix[draw=black, very thin]  at (-4,0) { 
            \draw[redd, dotted, very thick](0,0)--(0.54,0);&\node {{\scriptsize CP}};&
            \node {~~~~~};&
            \draw[bluee, dashdotted, very thick](0,0)--(0.5,0);&\node {{\scriptsize $m=5$}};&
            \node {~~~~~};&
            \draw[brownn, dashed, very thick](0,0)--(0.5,0);&\node {{\scriptsize $m=10$}};&
            \node {~~~~~};&
            \draw[greenn, dashdotdotted, very thick](0,0)--(0.5,0);&\node {{\scriptsize $m=15$}};\\
        };
    \end{tikzpicture}
    \end{subfigure}
\caption{Normalized $M$-induced distance to the solution vs.\ iteration number (\emph{left}) and scaled iteration number (\emph{right})  for the $l_1$-norm regularized SVM, problem \eqref{eq:l1_reg_svm}, with $\delta = 1.0$, on the \emph{sonar} dataset \cite{chang2011libsvm} with 208 samples and 60 features. Solved using the Chambolle--Pock algorithm and Alg\labelcref{alg:PD-DWIFOB} ($\lambda=1.0$, $m$, $\xi=10^{-5}$) for several memory sizes $m$, all with $\tau=\sigma = 0.99/\norm{L}$. }
\label{fig:CP_Alg4_m_sonar}
\end{figure}

\begin{figure}
    \centering
    \begin{subfigure}{\textwidth}
    \centering
    \begin{tikzpicture}[scale=1]
        \node[text=black](title) at (0.05\linewidth,0.33\linewidth) {{\scriptsize CP vs.\ Alg\labelcref{alg:PD-DWIFOB} ($\lambda=1.0,m,\xi=10^{-6}$)}};
    \end{tikzpicture}
    \end{subfigure}
    \begin{subfigure}{0.49\textwidth}
    \centering
    \begin{tikzpicture}[scale=1.0]
      \node[inner sep=0pt] (fig) at (0,0) {\includegraphics[ width = 0.92\linewidth,keepaspectratio]{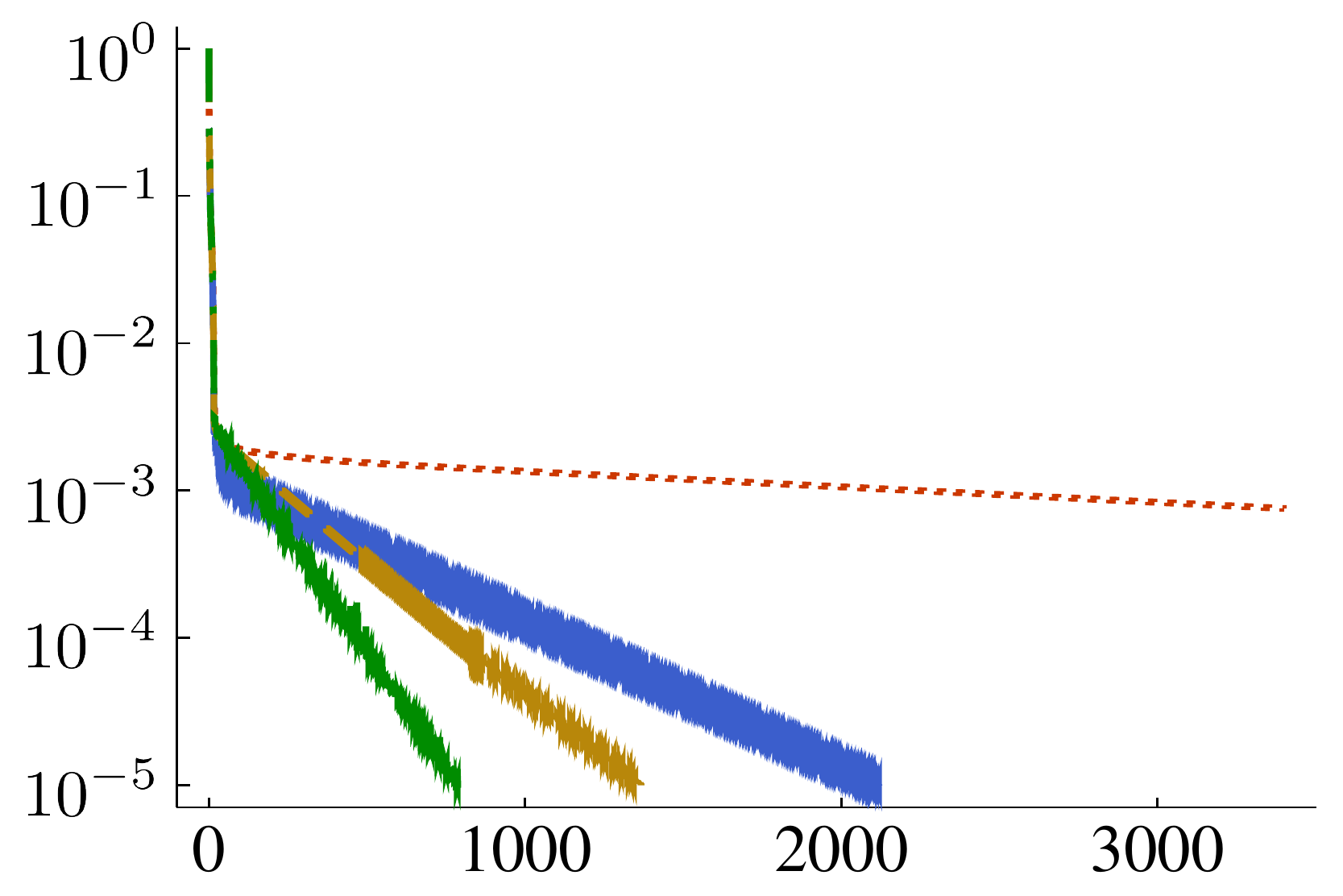}};
        \node[text=black](xlabel) at (0.05\linewidth,-.33\linewidth) {{\footnotesize $\times 10^3$ iteration}};
        \node[rotate=90, text=black] (ylabel) at (-.51\linewidth,0) {$\frac{\norm{(x_n,\mu_n)-(x^\star,\mu^\star)}_M}{\norm{(x_0,\mu_0)-(x^\star,\mu^\star)}_M}$};
    \end{tikzpicture}
    \end{subfigure}
    \begin{subfigure}{0.49\textwidth}
    \centering
    \begin{tikzpicture}[scale=1.0]
        \node[inner sep=0pt] (fig) at (0,0) {\includegraphics[ width = 0.92\linewidth,keepaspectratio]{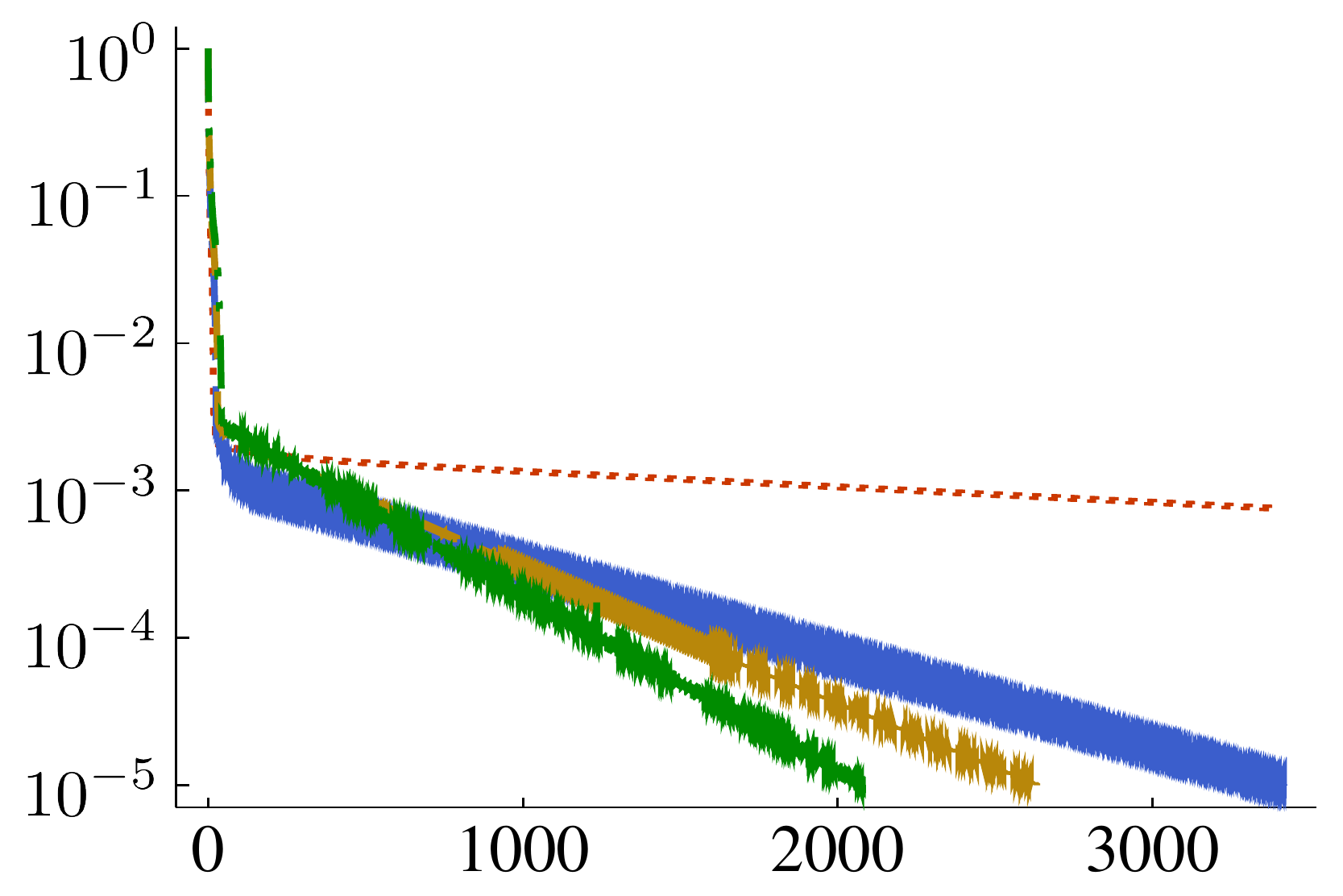}};
        \node[text=black](xlabel) at (0.05\linewidth,-.33\linewidth) {{\footnotesize $\times 10^3$ scaled iteration}};
    \end{tikzpicture}
    \end{subfigure}
    \begin{subfigure}{\textwidth}
    \centering
    \begin{tikzpicture}[scale=1]
        \matrix[draw=black, very thin]  at (-4,0) { 
            \draw[redd, dotted, very thick](0,0)--(0.54,0);&\node {{\scriptsize CP}};&
            \node {~~~~~};&
            \draw[bluee, dashdotted, very thick](0,0)--(0.5,0);&\node {{\scriptsize $m=5$}};&
            \node {~~~~~};&
            \draw[brownn, dashed, very thick](0,0)--(0.5,0);&\node {{\scriptsize $m=15$}};&
            \node {~~~~~};&
            \draw[greenn, dashdotdotted, very thick](0,0)--(0.5,0);&\node {{\scriptsize $m=25$}};\\
        };
    \end{tikzpicture}
    \end{subfigure}
\caption{Normalized $M$-induced distance to the solution vs.\ iteration number (\emph{left}) and scaled iteration number (\emph{right})  for the $l_1$-norm regularized SVM, problem \eqref{eq:l1_reg_svm}, with $\delta = 0.1$, on the \emph{colon cancer} dataset \cite{chang2011libsvm} with 62 samples and 2000 features. Solved using the Chambolle--Pock algorithm and Alg\labelcref{alg:PD-DWIFOB} ($\lambda=1.0$, $m$, $\xi=10^{-6}$) for several memory sizes, $m$, all with $\tau=\sigma = 0.99/\norm{L}$. }
\label{fig:CP_Alg4_m_colonCancer}
\end{figure}

\begin{figure}
    \centering
    \begin{subfigure}{0.49\textwidth}
    \centering
    \begin{tikzpicture}[scale=1.0]
      \node[inner sep=0pt] (fig) at (0,0) {\includegraphics[ width = 0.92\linewidth,keepaspectratio]{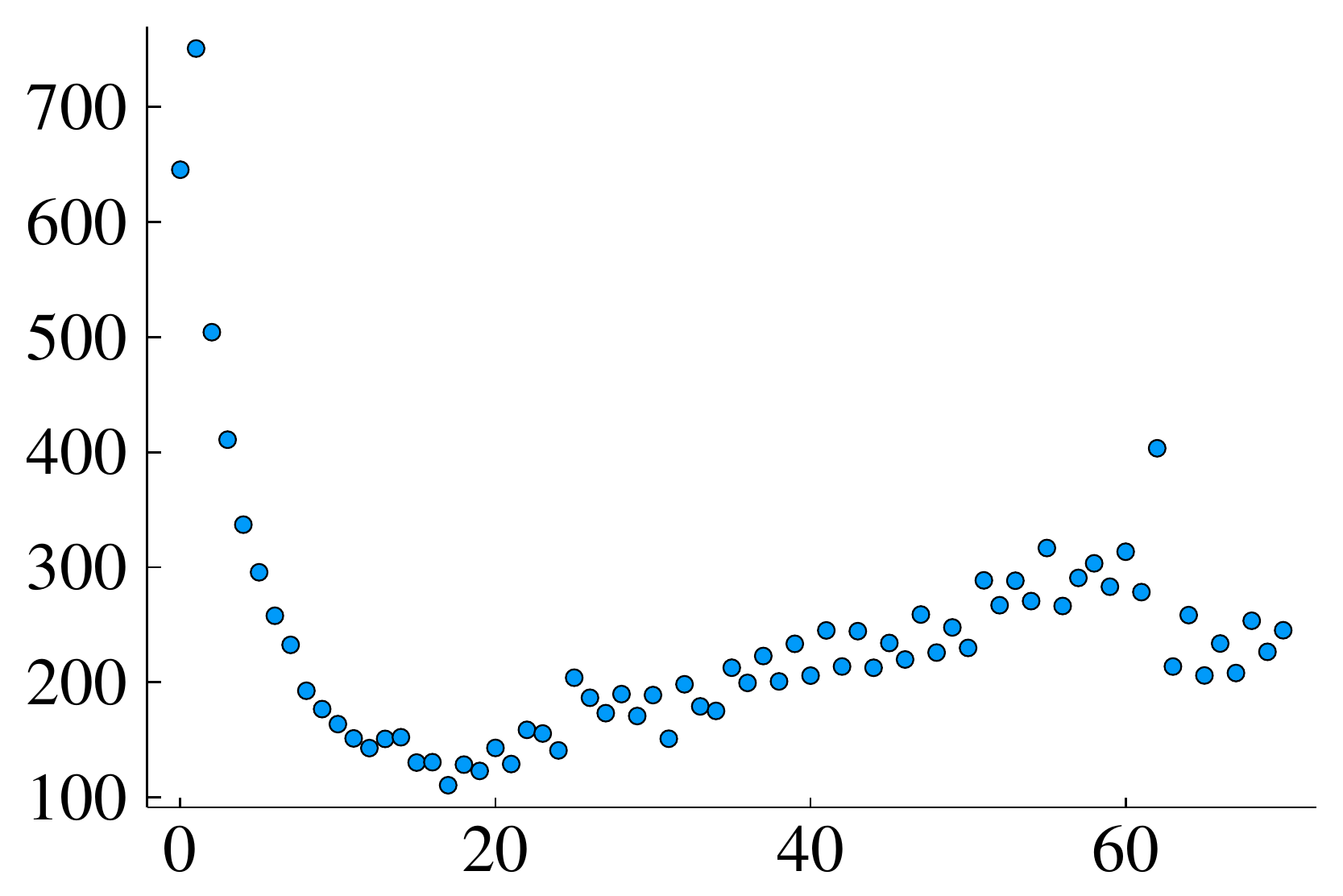}};
        \node[text=black](xlabel) at (0.05\linewidth,-.33\linewidth) {{\footnotesize memory size $m$}};
        \node[rotate=90, text=black] (ylabel) at (-.51\linewidth,0) {{\footnotesize $\times 10^3$ scaled iteration}};
    \end{tikzpicture}
    \end{subfigure}
    \begin{subfigure}{0.49\textwidth}
    \centering
    \begin{tikzpicture}[scale=1.0]
        \node[inner sep=0pt] (fig) at (0,0) {\includegraphics[ width = 0.92\linewidth,keepaspectratio]{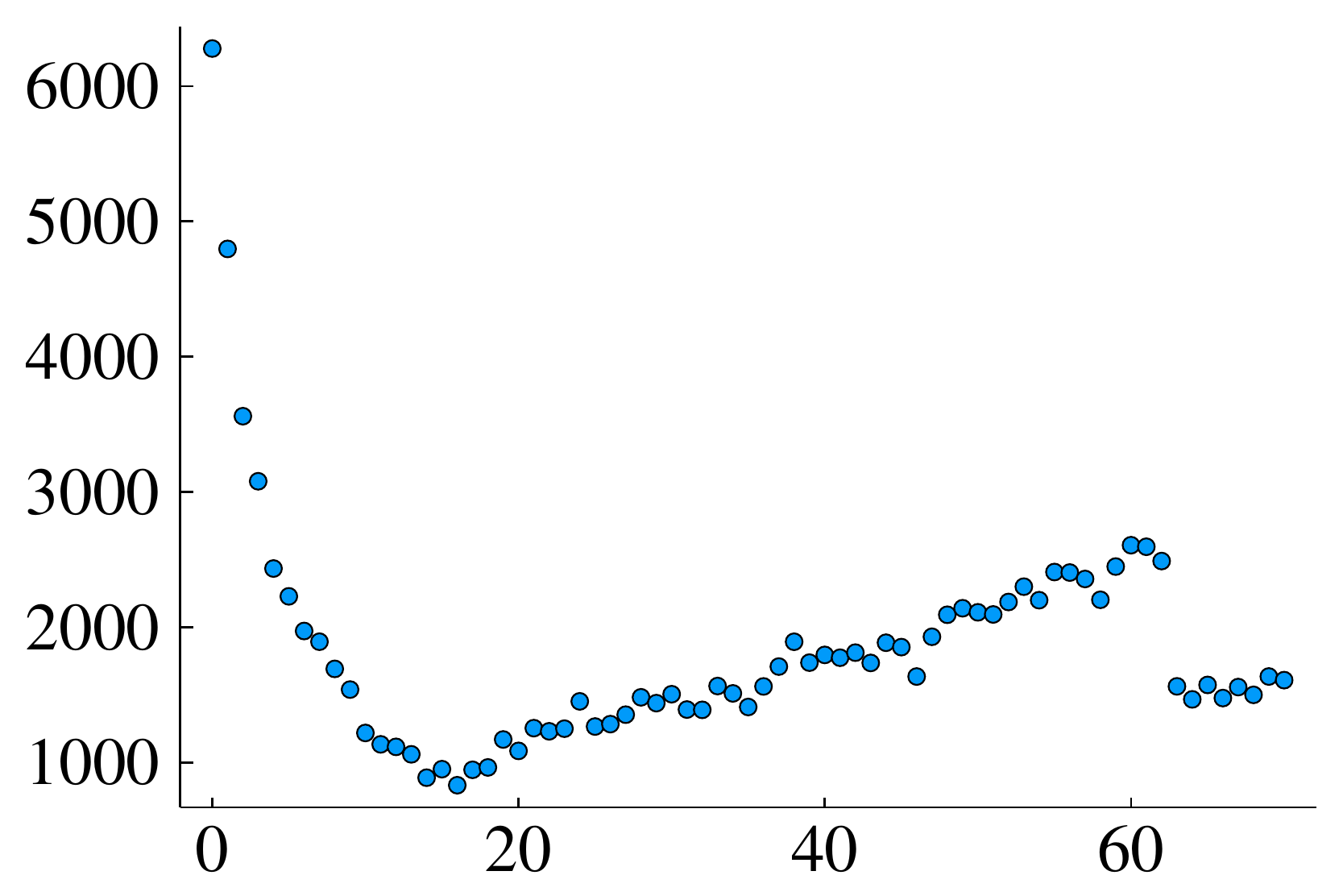}};
        \node[text=black](xlabel) at (0.05\linewidth,-.33\linewidth) {{\footnotesize memory size $m$}};
        \node[rotate=90, text=black] (ylabel) at (-.51\linewidth,0) {{\footnotesize $\times 10^3$ scaled iteration}};
    \end{tikzpicture}
    \end{subfigure}
    \begin{subfigure}{0.49\textwidth}
    \centering
    \begin{tikzpicture}[scale=1.0]
        \node[inner sep=0pt] (fig) at (0,0) {\includegraphics[ width = 0.92\linewidth,keepaspectratio]{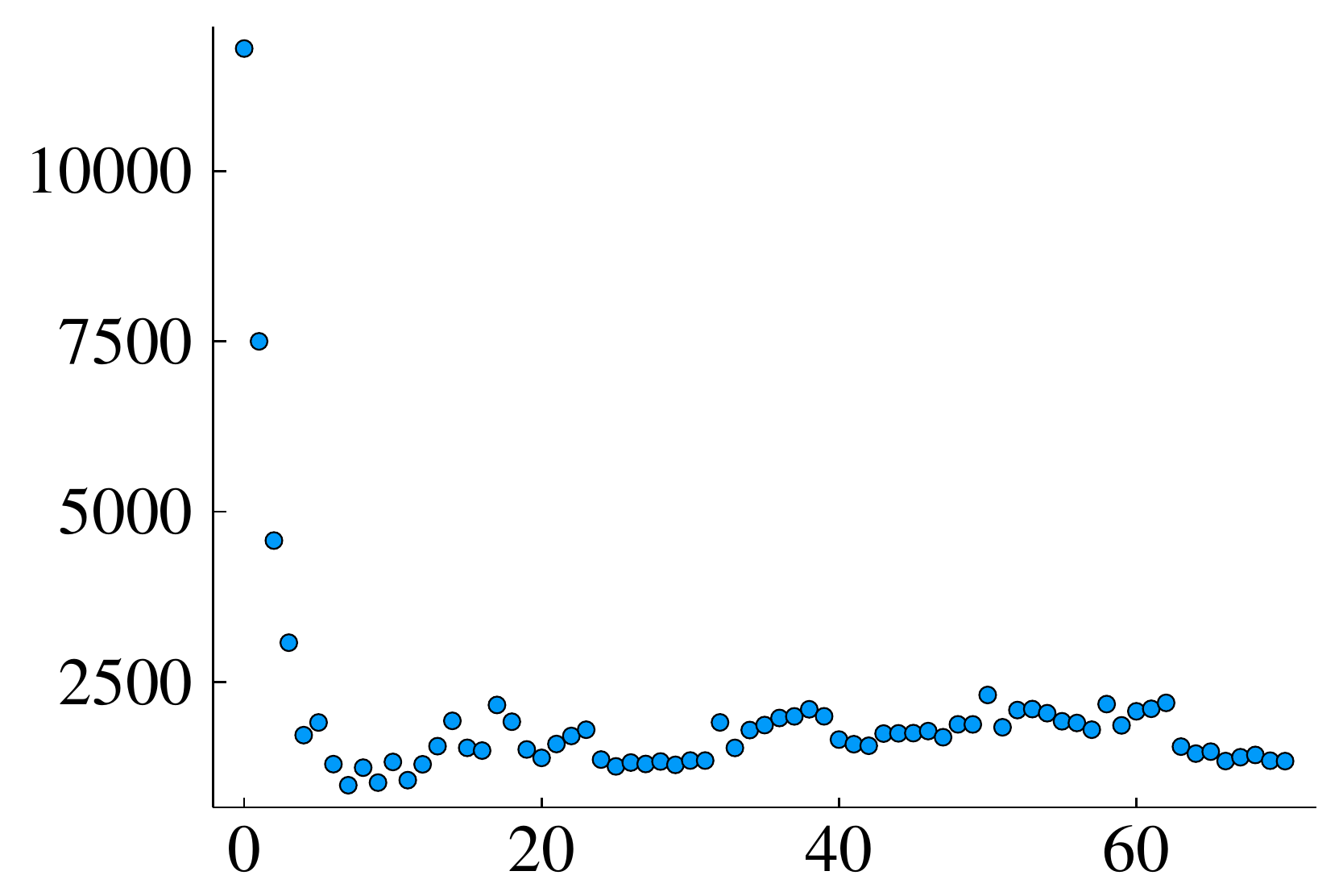}};
        \node[text=black](xlabel) at (0.05\linewidth,-.33\linewidth) {{\footnotesize memory size $m$}};
        \node[rotate=90, text=black] (ylabel) at (-.51\linewidth,0) {{\footnotesize $\times 10^3$ scaled iteration}};
    \end{tikzpicture}
    \end{subfigure}
\caption{Number of scaled iterations until the normalized $M$-induced distance to the solution gets smaller than some value \emph{tol} vs.\ memory size with the settings in the experiments of \cref{fig:CP_Alg4_m_breastCancer} (\emph{tol}$=10^{-8}$, \emph{top left} panel), \cref{fig:CP_Alg4_m_sonar} (\emph{tol}$=10^{-6}$, \emph{top right} panel), and \cref{fig:CP_Alg4_m_colonCancer} (\emph{tol}$=10^{-4}$, \emph{bottom} panel); using \cref{alg:PD-DWIFOB}, where $m=0$ corresponds to the Chambolle--Pock method and $m=1$ corresponds to the inertial primal--dual method of \cite{nofob-increments}.}
\label{fig:scaled_itr_vs_m}
\end{figure}

\begin{figure}
    \centering
    \begin{subfigure}{0.49\textwidth}
    \centering
    \begin{tikzpicture}[scale=1.0]
      \node[inner sep=0pt] (fig) at (0,0) {\includegraphics[ width = 0.92\linewidth,keepaspectratio]{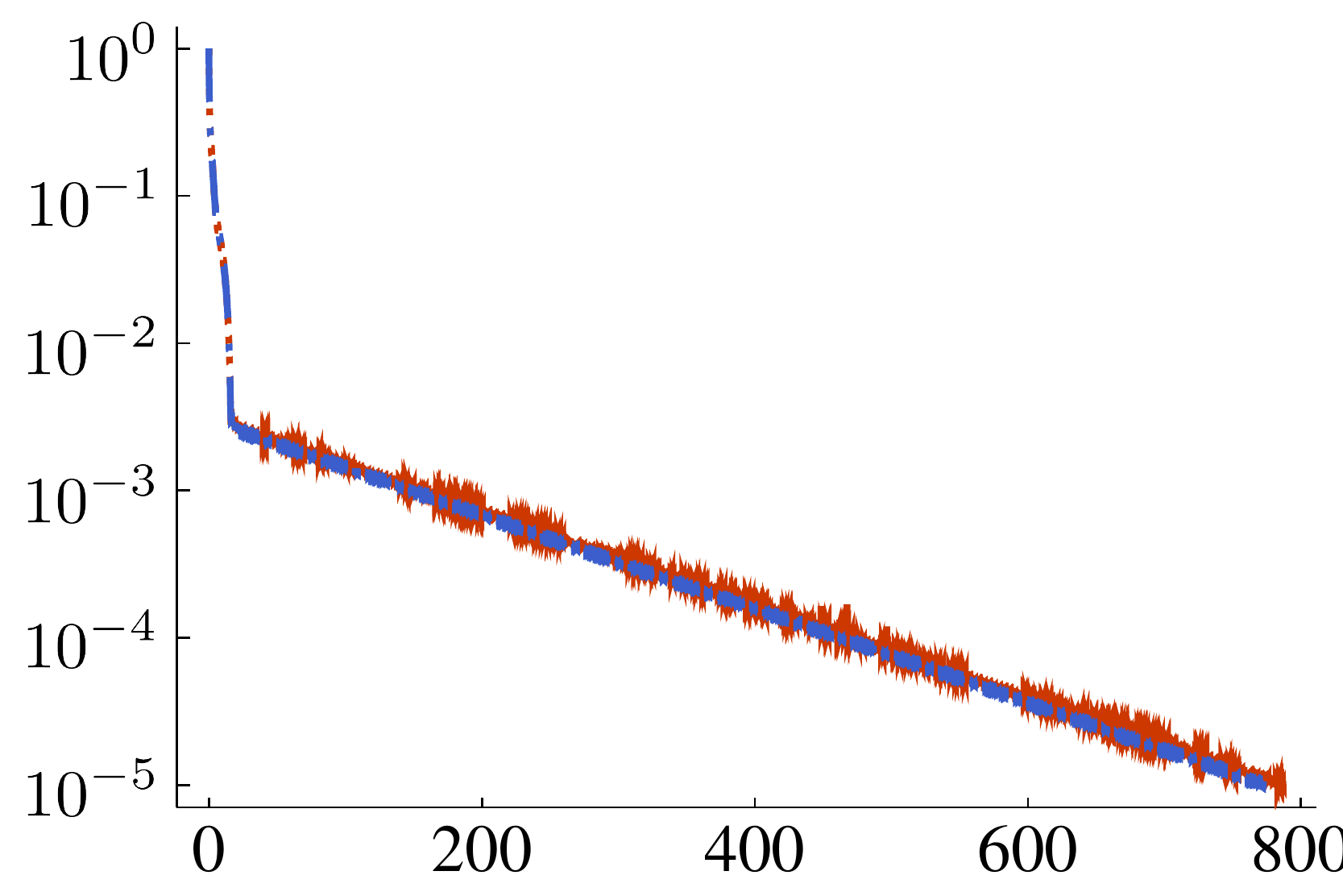}};
        \node[text=black](xlabel) at (0.05\linewidth,-.33\linewidth) {{\footnotesize $\times 10^3$ iteration}};
        \node[rotate=90, text=black] (ylabel) at (-.51\linewidth,0) {$\frac{\norm{(x_n,\mu_n)-(x^\star,\mu^\star)}_M}{\norm{(x_0,\mu_0)-(x^\star,\mu^\star)}_M}$};
    \end{tikzpicture}
    \end{subfigure}
    \begin{subfigure}{0.49\textwidth}
    \centering
    \begin{tikzpicture}[scale=1.0]
        \node[inner sep=0pt] (fig) at (0,0) {\includegraphics[ width = 0.92\linewidth,keepaspectratio]{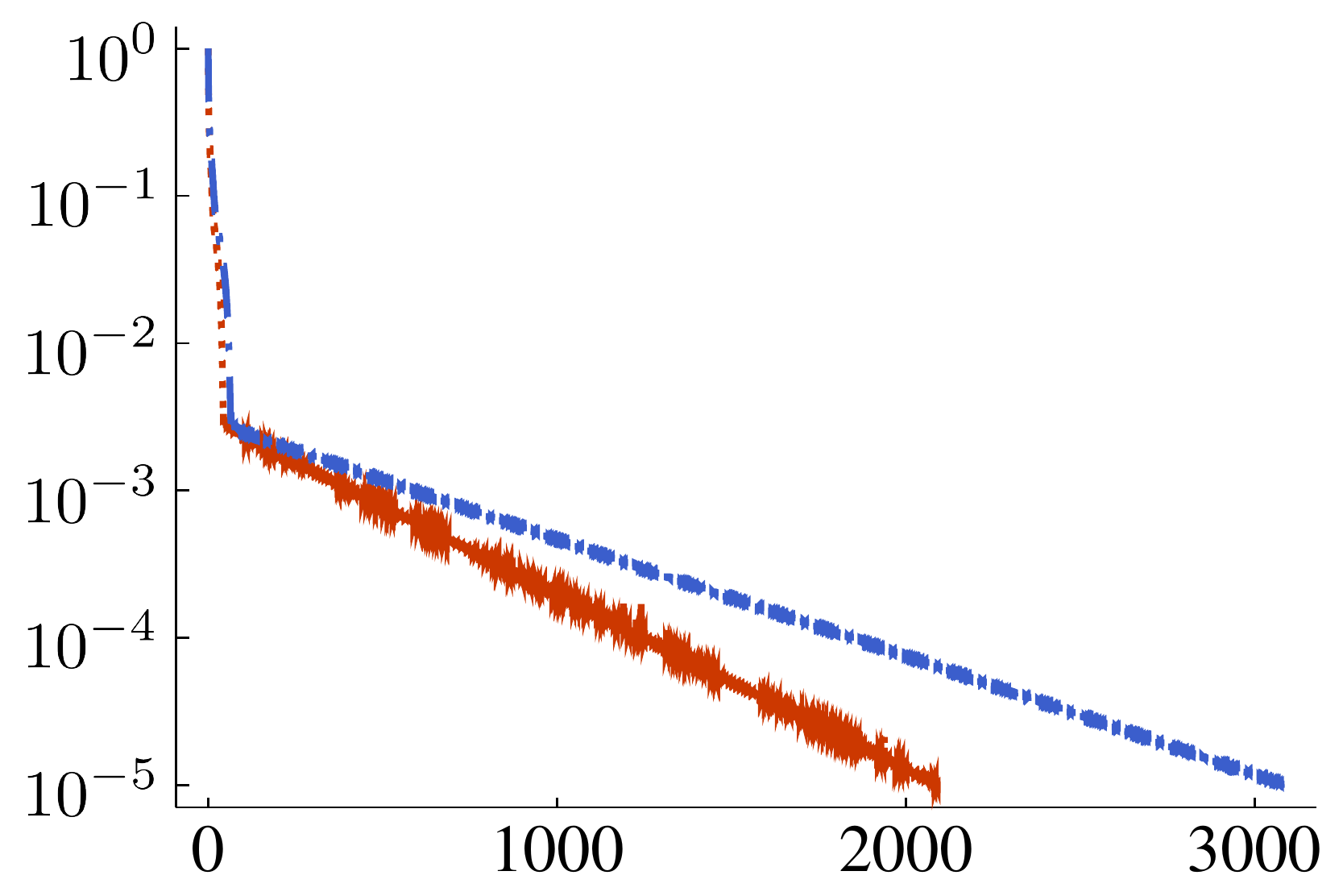}};
        \node[text=black](xlabel) at (0.05\linewidth,-.33\linewidth) {{\footnotesize $\times 10^3$ scaled iteration}};
    \end{tikzpicture}
    \end{subfigure}
    \begin{subfigure}{0.49\textwidth}
    \centering
    \begin{tikzpicture}[scale=1.0]
        \node[inner sep=0pt] (fig) at (0,0) {\includegraphics[ width = 0.92\linewidth,keepaspectratio]{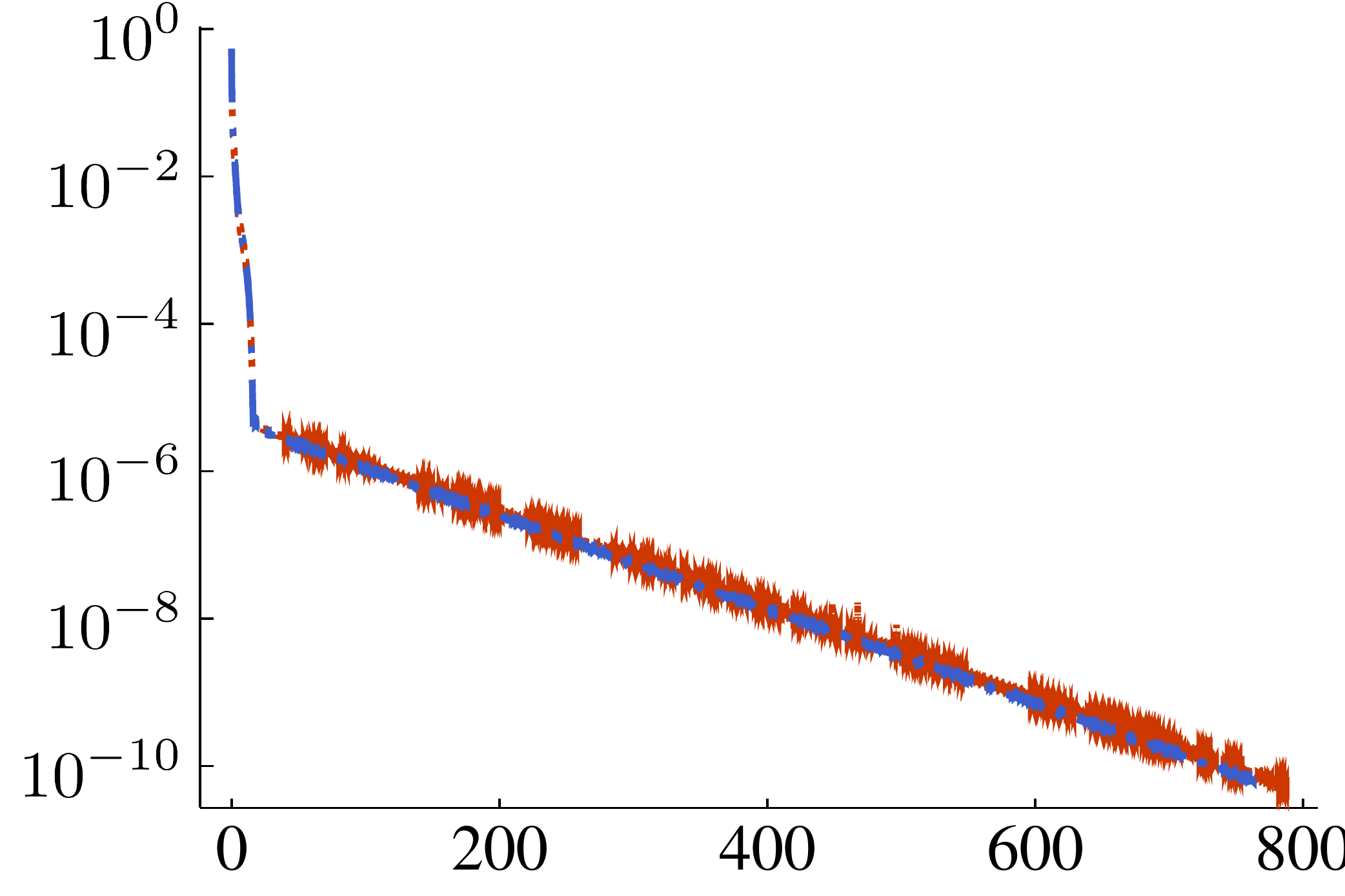}};
        \node[text=black](xlabel) at (0.05\linewidth,-.33\linewidth) {{\footnotesize $\times 10^3$ iteration}};
        \node[rotate=90, text=black] (ylabel) at (-.51\linewidth,0) {$V_n$};
    \end{tikzpicture}
    \end{subfigure}
    \begin{subfigure}{\textwidth}
    \centering
    \begin{tikzpicture}[scale=1]
        \matrix[draw=black, very thin]  at (-4,0) { 
            \draw[redd, dotted, very thick](0,0)--(0.54,0);&\node {{\scriptsize with recursive evaluation of $L$, $L^*$, and $\norm{\cdot}_M$}};\\
            \draw[bluee, dashdotted, very thick](0,0)--(0.54,0);&\node {{\scriptsize with direct evaluation of $L$, $L^*$, and $\norm{\cdot}_M$~~~~~}};\\
        };
    \end{tikzpicture}
    \end{subfigure}
\caption{Comparing the impact of \emph{recursive} and \emph{direct} evaluation of $L$, $L^*$, and $\norm{\cdot}_M$ on the convergence pattern of Alg\labelcref{alg:PD-DWIFOB} ($\lambda=1.0$, $m=25$, $\xi=10^{-6}$) for problem \eqref{eq:l1_reg_svm} with $\delta = 0.1$, on the \emph{colon cancer} dataset \cite{chang2011libsvm}; \emph{Top panels:} normalized $M$-induced distance to the solution vs.\ iteration number (\emph{top left}) and scaled iteration number (\emph{top right}); \emph{bottom panel:} $V_n$ (defined in \eqref{eq:decreasing-quantity})  vs.\ iteration number.}
\label{fig:alg4_spikes}
\end{figure}

\begin{figure}
    \centering
    \begin{subfigure}{\textwidth}
    \centering
    \begin{tikzpicture}[scale=1]
        \node[text=black](title) at (0.05\linewidth,0.33\linewidth) {{\scriptsize \hspace{3mm} CP vs.\ Alg\labelcref{alg:PD-DWIFOB} ($\lambda=1.0,m,\xi=10^{-5}$) \hspace{15mm} CP vs.\ RAA ($m,\xi=10^{-5}$)}};
    \end{tikzpicture}
    \end{subfigure}
    \begin{subfigure}{0.49\textwidth}
    \centering
    \begin{tikzpicture}[scale=1.0]
      \node[inner sep=0pt] (fig) at (0,0) {\includegraphics[ width = 0.92\linewidth,keepaspectratio]{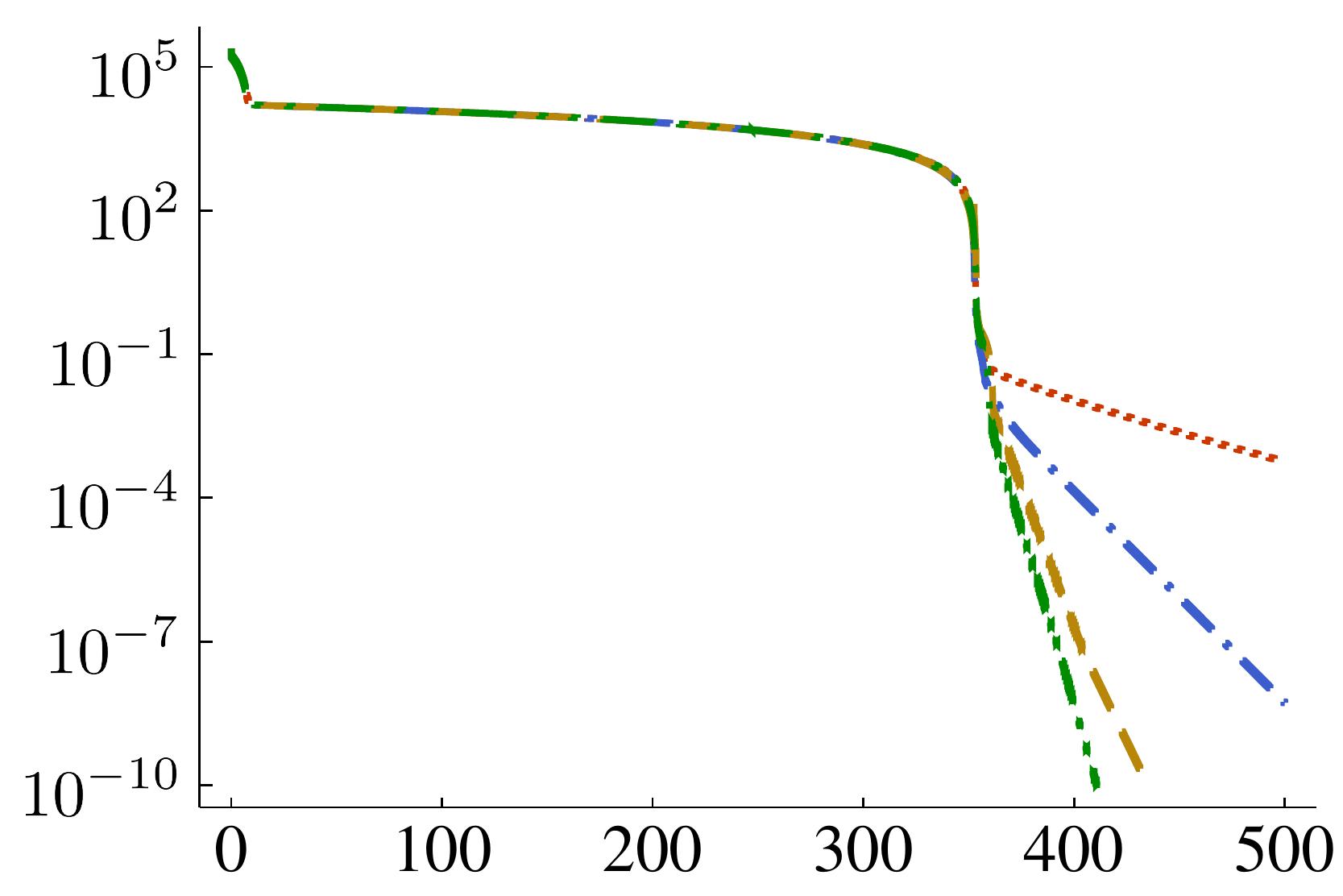}};
        \node[text=black](xlabel) at (0.05\linewidth,-.33\linewidth) {{\footnotesize $\times 10^3$ iteration}};
        \node[rotate=90, text=black] (ylabel) at (-.51\linewidth,0) {{\footnotesize $\norm{(x_n,\mu_n)-(x^\star,\mu^\star)}_M$}};
    \end{tikzpicture}
    \end{subfigure}
    \begin{subfigure}{0.49\textwidth}
    \centering
    \begin{tikzpicture}[scale=1.0]
        \node[inner sep=0pt] (fig) at (0,0) {\includegraphics[ width = 0.92\linewidth,keepaspectratio]{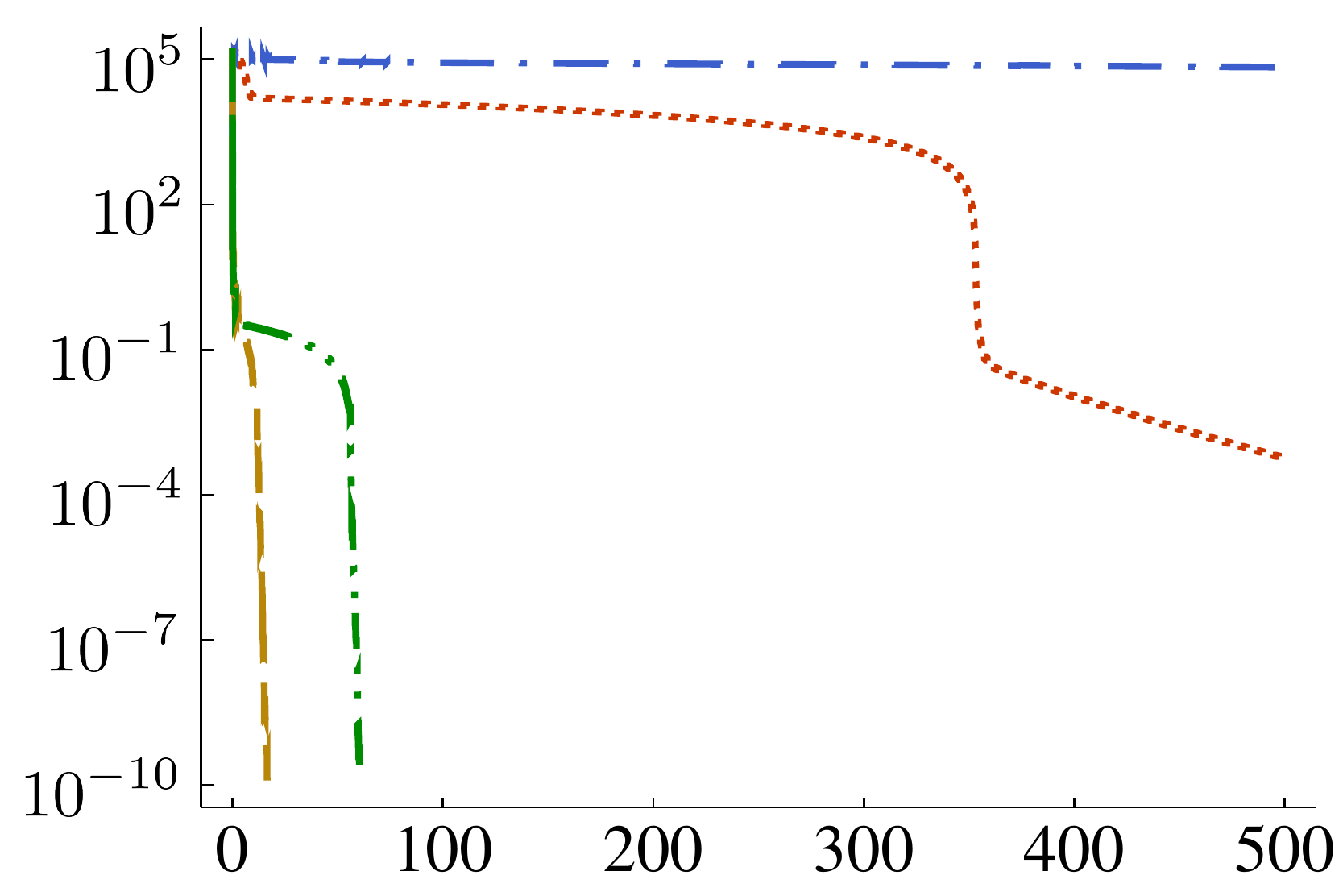}};
        \node[text=black](xlabel) at (0.05\linewidth,-.33\linewidth) {{\footnotesize $\times 10^3$ iteration}};
    \end{tikzpicture}
    \end{subfigure}
    \begin{subfigure}{\textwidth}
    \centering
    \begin{tikzpicture}[scale=1]
        \node[text=black](title) at (0.05\linewidth,0.33\linewidth) {{\scriptsize \hspace{3mm} CP vs.\ Alg\labelcref{alg:PD-DWIFOB} ($\lambda=1.0,m,\xi=10^{-6}$) \hspace{15mm} CP vs.\ RAA ($m,\xi=10^{-6}$)}};
    \end{tikzpicture}
    \end{subfigure}
    \begin{subfigure}{0.49\textwidth}
    \centering
    \begin{tikzpicture}[scale=1.0]
      \node[inner sep=0pt] (fig) at (0,0) {\includegraphics[ width = 0.92\linewidth,keepaspectratio]{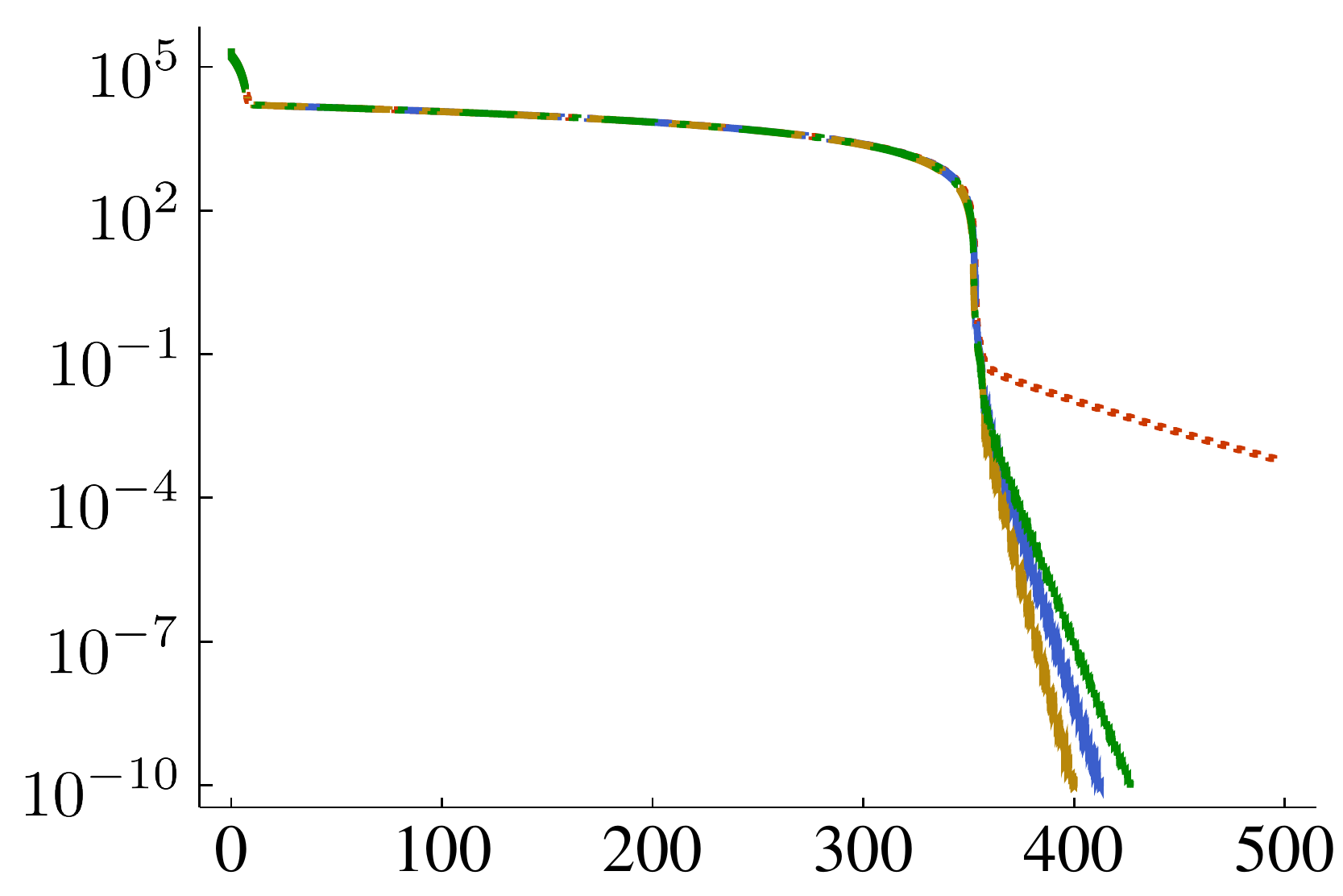}};
        \node[text=black](xlabel) at (0.05\linewidth,-.33\linewidth) {{\footnotesize $\times 10^3$ iteration}};
        \node[rotate=90, text=black] (ylabel) at (-.51\linewidth,0) {{\footnotesize $\norm{(x_n,\mu_n)-(x^\star,\mu^\star)}_M$}};
    \end{tikzpicture}
    \end{subfigure}
    \begin{subfigure}{0.49\textwidth}
    \centering
    \begin{tikzpicture}[scale=1.0]
        \node[inner sep=0pt] (fig) at (0,0) {\includegraphics[ width = 0.92\linewidth,keepaspectratio]{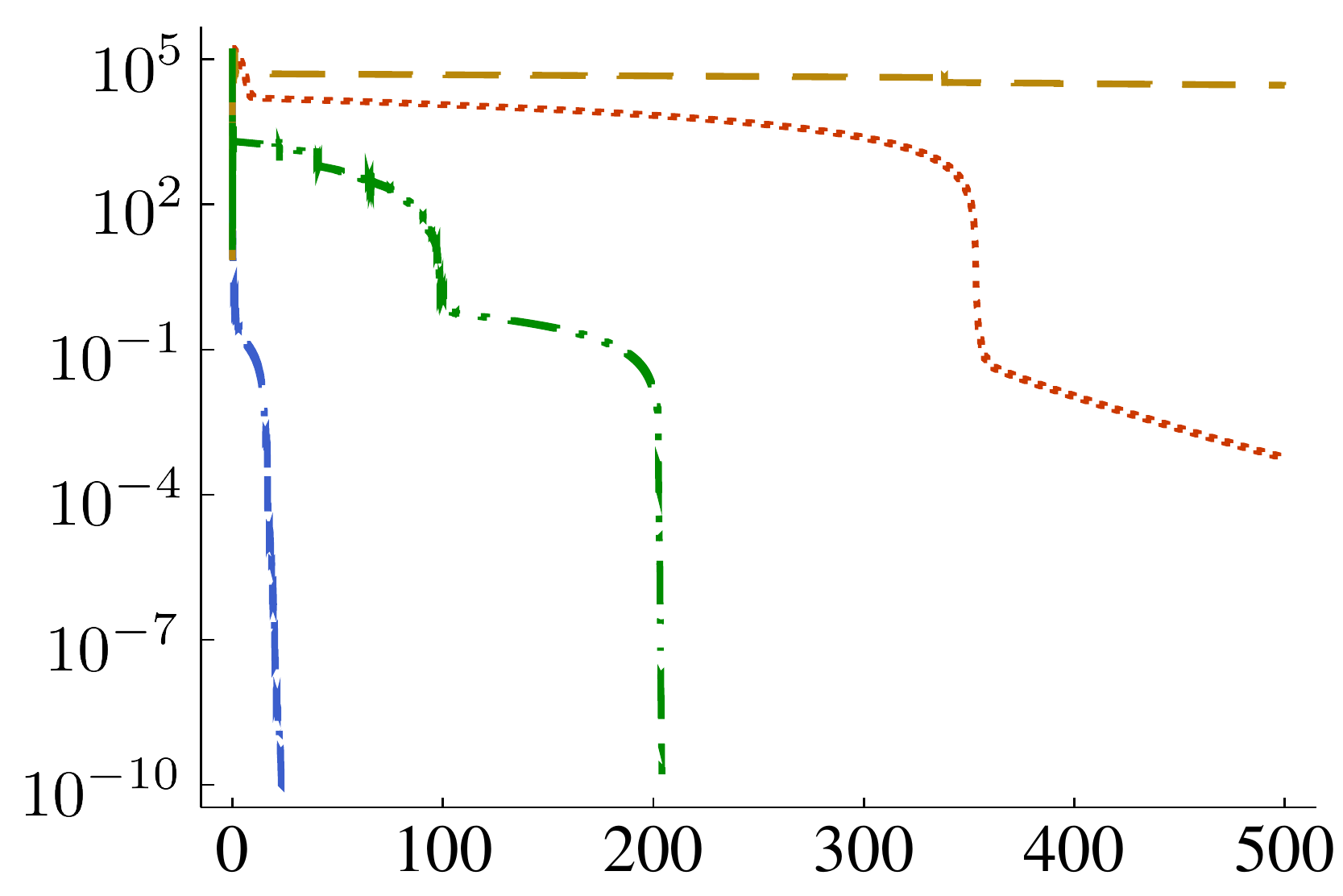}};
        \node[text=black](xlabel) at (0.05\linewidth,-.33\linewidth) {{\footnotesize $\times 10^3$ iteration}};
    \end{tikzpicture}
    \end{subfigure}
    \begin{subfigure}{\textwidth}
    \centering
    \begin{tikzpicture}[scale=1]
        \node[text=black](title) at (0.05\linewidth,0.33\linewidth) {{\scriptsize \hspace{3mm} CP vs.\ Alg\labelcref{alg:PD-DWIFOB} ($\lambda=1.0,m,\xi=10^{-7}$) \hspace{15mm} CP vs.\ RAA ($m,\xi=10^{-7}$)}};
    \end{tikzpicture}
    \end{subfigure}
    \begin{subfigure}{0.49\textwidth}
    \centering
    \begin{tikzpicture}[scale=1.0]
      \node[inner sep=0pt] (fig) at (0,0) {\includegraphics[ width = 0.92\linewidth,keepaspectratio]{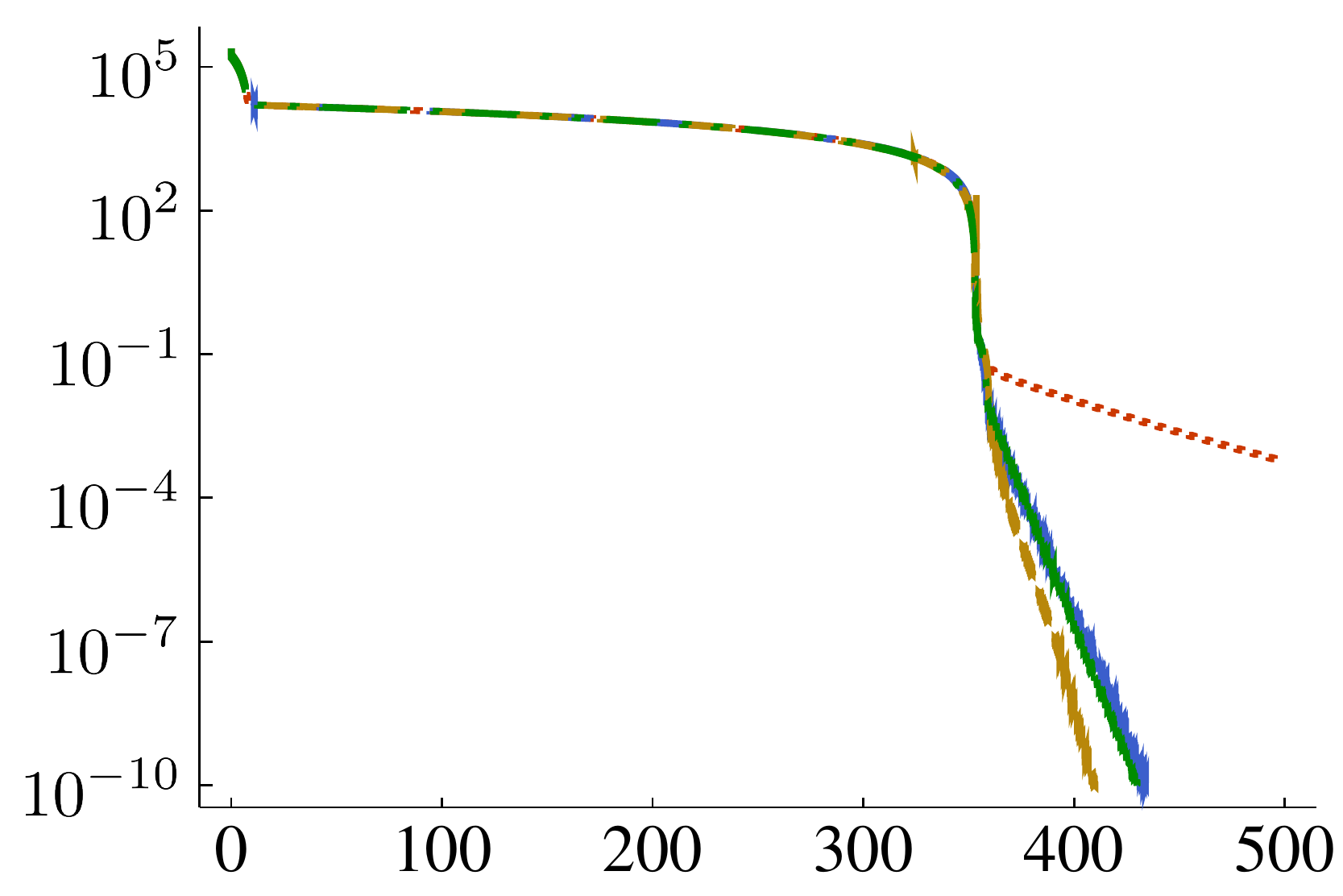}};
        \node[text=black](xlabel) at (0.05\linewidth,-.33\linewidth) {{\footnotesize $\times 10^3$ iteration}};
        \node[rotate=90, text=black] (ylabel) at (-.51\linewidth,0) {{\footnotesize $\norm{(x_n,\mu_n)-(x^\star,\mu^\star)}_M$}};
    \end{tikzpicture}
    \end{subfigure}
    \begin{subfigure}{0.49\textwidth}
    \centering
    \begin{tikzpicture}[scale=1.0]
        \node[inner sep=0pt] (fig) at (0,0) {\includegraphics[ width = 0.92\linewidth,keepaspectratio]{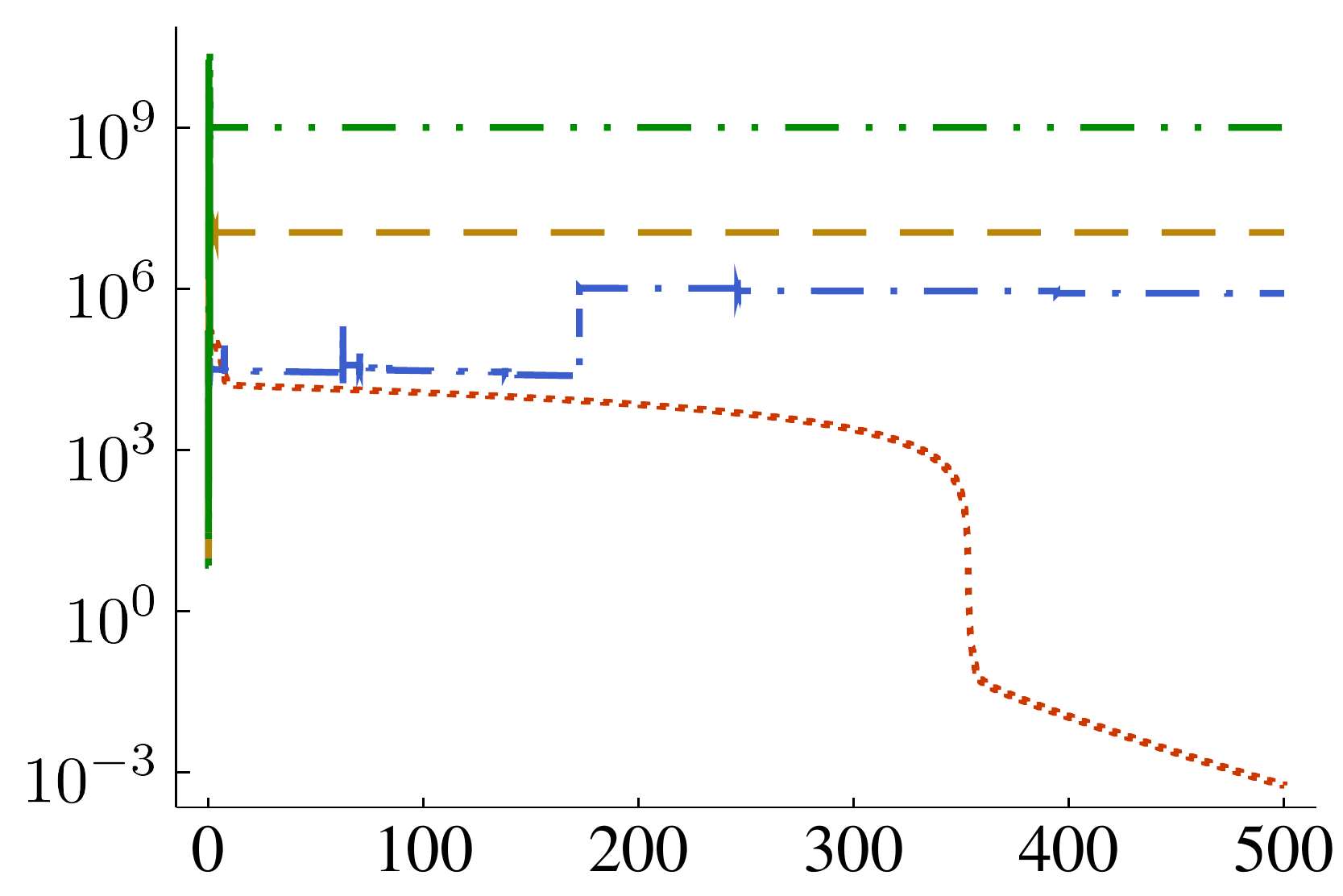}};
        \node[text=black](xlabel) at (0.05\linewidth,-.33\linewidth) {{\footnotesize $\times 10^3$ iteration}};
    \end{tikzpicture}
    \end{subfigure}
    \begin{subfigure}{\textwidth}
    \centering
    \begin{tikzpicture}[scale=1]
        \matrix[draw=black, very thin]  at (-4,0) { 
            \draw[redd, dotted, very thick](0,0)--(0.54,0);&\node {{\scriptsize CP}};&
            \node {~~~~~};&
            \draw[bluee, dashdotted, very thick](0,0)--(0.5,0);&\node {{\scriptsize $m=5$}};&
            \node {~~~~~};&
            \draw[brownn, dashed, very thick](0,0)--(0.5,0);&\node {{\scriptsize $m=10$}};&
            \node {~~~~~};&
            \draw[greenn, dashdotdotted, very thick](0,0)--(0.5,0);&\node {{\scriptsize $m=15$}};\\
        };
    \end{tikzpicture}
    \end{subfigure}
\caption{Normalized $M$-induced distance to the solution vs.\ iteration number for the $l_1$-norm regularized SVM problem \eqref{eq:l1_reg_svm} with $\delta = 0.5$ on the \emph{breast cancer} dataset \cite{chang2011libsvm} with 683 samples and 10 features. Solved using the Chambolle--Pock algorithm, Alg\labelcref{alg:PD-DWIFOB} ($\lambda=1.0$, $m$, $\xi$) (\emph{left-hand side} plots), and RAA ($m$, $\xi$) (\emph{right-hand side} plots) for several memory sizes and Tikhonov regularization parameters, all with $\tau=\sigma = 0.99/\norm{L}$. In this case, the initial point is set far from the origin, namely, at a distance of approximately $2.6\times10^5$ to the origin.}
\label{fig:Alg4_RAA_breastCancer}
\end{figure}

\Crefrange{fig:CP_Alg4_m_breastCancer}{fig:CP_Alg4_m_colonCancer} provide a comparison between the Chambolle--Pock method and \cref{alg:PD-DWIFOB} for several memory size values using different datasets. The figures show that for the considered different values of the memory size $m$, \cref{alg:PD-DWIFOB} outperforms the Chambolle--Pock method. It can also be seen that increasing the memory size $m$ in \cref{alg:PD-DWIFOB} improves the local convergence rate. However, by increasing $m$ in \cref{alg:PD-DWIFOB}, the computational cost of solving the least-squares problem increases, while the computational cost of the resolvent steps is fixed. Therefore, it is expected that there is an optimal memory size beyond which increasing $m$ degrades the performance (compared to the optimal one). This can be better seen in \cref{fig:scaled_itr_vs_m}, which shows the number of scaled iterations until the $M$-scaled distance of $(x_n,\mu_n)$ to the solution is less than some value \emph{tol}, against the memory size. It is seen  that we get good performance for a wide range of memory sizes (typically $10\leq m \leq25$). It is also good to mention that even for small or large $m$, we still see a considerable improvement compared to the Chambolle--Pock method.

\Cref{fig:alg4_spikes} shows the impact of using direct evaluation  of $L$, $L^*$, and $\norm{\cdot}_M$ instead of the proposed recursive method of \cref{subsec:efficient_evaluation}, on the convergence pattern of \cref{alg:PD-DWIFOB}. The experiment is done with the same setting as in the one reported in \cref{fig:CP_Alg4_m_colonCancer} for the case of $m=25$. The top right plot shows that the suggested method of recursive evaluation of \cref{alg:PD-DWIFOB} considerably decreases the overall computational cost, in this instance by about $30\%$. Additionally, it is observed that by using the suggested recursive evaluation of $L$, $L^*$, and $\norm{\cdot}_M$, we might see some unexpected spikes in the plots, which are caused by accumulated errors due to recursive evaluations, while using the direct evaluation method does not result in such spikes. The bottom plot in \cref{fig:alg4_spikes} compares 
\begin{equation}\label{eq:decreasing-quantity}
\begin{aligned}
    V_{n} &\defeq \normsq{\begin{bmatrix}x_{n+1}\\\mu_{n+1}\end{bmatrix}-\begin{bmatrix}\xs\\\mu^\star\end{bmatrix}}_M\\
    &\qquad\qquad+ {\lambda_n(2-\lambda_n)}\norm{\begin{bmatrix}p_{x,n}\\p_{\mu,n}\end{bmatrix}-\begin{bmatrix}x_{n}\\\mu_{n}\end{bmatrix}+\frac{\lambda_n-1}{2-\lambda_n}\begin{bmatrix}u_{x,n}\\u_{\mu,n}\end{bmatrix}}_M^2
\end{aligned}
\end{equation}
for the case of direct and recursive evaluation methods. According to \cite[Lemma 1]{nofob-increments} with exact evaluation of $L$, $L^*$, and $M$, this quantity should be decreasing, which is confirmed by the figure. However, this is not the case for the recursive evaluation method due to accumulated errors.

The results of experiments with the Chambolle--Pock method, \cref{alg:PD-DWIFOB}, and RAA are shown in \cref{fig:Alg4_RAA_breastCancer}. The plots on the left-hand side compare the Chambolle--Pock algorithm and \cref{alg:PD-DWIFOB} and the plots on the right-hand side show the convergence of RAA versus the Chambolle--Pock algorithm. For these experiments, the algorithms are initialized far from the origin (at $(x_0,\mu_0)=10^4\times\mathbf{1}_{694}$, where $\mathbf{1}_{694}$ is a vector of ones with $694$ elements). We see that RAA is not globally convergent; however, when it converges, it does so fast. It is also seen that RAA is really sensitive to parameter variations; and besides that, for it to perform well, there should be a reasonable match between the regularization parameter and its memory size (see the middle plot of RAA). On the other hand, \cref{alg:PD-DWIFOB} is more robust against variations in parameters. These results suggest that \cref{alg:PD-DWIFOB} is more reliable than RAA in the sense of robustness against variations in parameters and also predictability of its behavior.

The distances to a solution for RAA that do not converge to zero in \cref{fig:Alg4_RAA_breastCancer} have not converged although they seem to have flat asymptotes. In fact, consecutive iterates differ a lot and the primal iterate inserted into the objective function \eqref{eq:l1_reg_svm_reform} gives values that are several orders of magnitude larger than the optimal value, also at the end of the simulation. This rules out that the algorithm converges to a different solution (if it exists) than all the other methods do.

\section{Conclusion}\label{sec:conclusion}
We have proposed a novel scheme to solve structured monotone inclusion problems. By combining a variant of FB splitting with deviations with an extrapolation technique similar to that of Anderson acceleration, we introduced the \dwifob{} algorithm. Using the flexibility that the FB algorithm with deviations provides, we introduced a primal--dual variant of the \dwifob{} algorithm. Numerical experiments on an $l_1$-norm regularized SVM problem showed that the primal--dual variant of the \dwifob{} algorithm outperforms the Chambolle--Pock primal--dual method. Additionally, we compared the performance of the primal--dual variation of \dwifob{} to the regularized Anderson acceleration  on the same benchmark problem. The results showed that, in addition to only being locally (though fast) convergent, Anderson acceleration is very sensitive to the variations in choice of parameters while primal--dual \dwifob{} is much more robust against them. This makes the behavior of the \dwifob{} algorithm more reliable and predictable.

\paragraph{Acknowledgement.} The authors would like to thank Bo Bernhardsson (Department of Automatic Control,  Lund University) for his valuable feedback on this work. This research was partially supported by Wallenberg AI, Autonomous Systems and Software Program (WASP) funded by the Knut and Alice Wallenberg Foundation.  Sebastian Banert was partially supported by ELLIIT.

\printbibliography

\end{document}